\documentclass[a4paper,11pt ]{article}
\usepackage[top=3.5cm,bottom=3.5cm,right=3cm,left=3cm]{geometry}
\usepackage[T1]{fontenc}
\usepackage[utf8]{inputenc}

\usepackage{amsfonts}
\usepackage{amsmath}
\usepackage{amssymb}
\usepackage{amscd}
\usepackage{amsthm}
\usepackage{mathrsfs}

\usepackage[all]{xy}

\newtheorem{theorem}{Theorem}[subsection]
\newtheorem{lemma}{Lemma}[subsection]
\newtheorem{proposition}{Proposition}[subsection]

\newtheorem{defi}{Definition}

\begin{document}

\date{}
\title{\large{FROBENIUS AND NON LOGARITHMIC RAMIFICATION }}

\author{{\bf Stéphanie Reglade}}

\maketitle

{\footnotesize \noindent\textbf{Abstract}: \textit{A $\ell$-extension is said logarithmically unramified if it is locally cyclotomic.
The purpose of this article is to explain the construction of the logarithmic Frobenius, which plays the role usually played by the classical Frobenius, but in the context of the logarithmic ramification.The interesting point is that usual and logarithmic Frobenius coincide when usual and logarithmic ramification are the same.}}
\smallskip

\medskip

\noindent \textbf{Key words}: \textbf{class field theory, $\ell$-adic class field theory.}
\medskip

\noindent \textbf{AMS Classification: 11R37}
\medskip

\tableofcontents
\bigskip

\noindent\textbf{\large Introduction}: \medskip

\noindent The notion of logarithmic ramification was developped by Jaulent in  {~\cite{2}}.
The starting point of this article is the following fact: let $L/K$ be an extension of number fields, let
$\mathfrak{p}$ be a prime of $K$, $\mathfrak{p}$ is logarithmically unramified in $L$ if
 $L_{\mathfrak{P}} \subseteq \hat{K_{\mathfrak{p}}^{c}}$, where $\hat{K_{\mathfrak{p}}^{c}}$ is the $\hat{\mathbb{Z}}$-cyclotomic extension of $K_{\mathfrak{p}}$.
\noindent Consequently the decomposition sub-group is cyclic. Thus we may naturally wonder if there exists, in
this decomposition sub-group, an element which is going to play the role usually played by the classical Frobenius ?

\medskip

\noindent Following Neukich's abstract theory ~\cite[chap.II]{1}, we first build the logarithmic local symbol. Neukirch's context starts with an abstract Galois theory and an abstract profinite group $G$. The key points of this formal theory are two fondamental morphims (the degree map and the valuation) and the class field axiom (cohomological condition on the  $G$-module $A$ we work with).

 \noindent In this article, the local object we study is the $\ell$-adification of the multiplicative group of a local field defined by Jaulent in \cite{2}. It is endowed with the logarithmic valuation (we recall its construction). We first define the degree map. We then prove that Neukich's abstract theory applies. This allows us to define the logarithmic local symbol. In particular we get an explicit expression for the logarithmic local symbol on the $\ell$-adification of $\mathbb{Q}_{p}$: 
\smallskip

\noindent \textbf{Proposition.}
\noindent \textit {Let $\zeta$ be a root of unity of order a $\ell$-th power, and $a \in \mathcal{R}_{\mathbb{Q}_{p}}==\mathbb{Z}_{\ell} \otimes_{\mathbb{Z}} \mathbb{Q}^{\times}_{p}$. The logarithmic local symbol is:
$$(a,(\mathbb{Q}_p (\zeta)/\mathbb{Q}_p)_{\ell})(\zeta)= \zeta^{n_{p}}$$ with
\begin{equation*}
  n_p=
     \begin{cases}
        p^{v_p(a)} & \text{ for $p\ne \ell$ and $ p\ne\infty $} \\
       (1+\ell)^{-\tilde v_{\ell}(a)} & \text{ for $p=\ell$}\\
        sgn(a) & \text{ for $p = \infty $ }
     \end{cases}
\end{equation*} 
\noindent where $ (\mathbb{Q}(\zeta)/\mathbb{Q})_\ell$ denotes the projection on the $\ell$-Sylow sub-group of  $\textrm{Gal}(\mathbb{Q}(\zeta)/\mathbb{Q}).$ }

\smallskip

\noindent Thus we study the global case. Finally, after choosing the logarithmic uniformizer  $\tilde \pi_{\mathfrak{p}} $, we obtain the explicit construction of the logarithmic Frobenius:
\medskip

\noindent \textbf{Definition.}
\noindent \textit{Let $L/K$ be an abelian $\ell$-extension of number fields. Let $\mathfrak{p}$ be a prime of $K$ logarithmically unramified in $L$. The logarithmic Frobenius attached to $\mathfrak{p}$ is:
$$ (\widetilde{\frac{L/K}{\mathfrak{p}}})=([\tilde{\pi_{\mathfrak{p}}}], L/K)$$
\noindent where $[\tilde \pi_{\mathfrak{p}}]$ is the image of the uniformizing element $\tilde \pi_{\mathfrak{p}}$, through the logarithmic global symbol defined on the $\ell$-adic idele group $\mathcal{J}_K=\prod \mathcal{R}_{K_{\mathfrak{p}}}$. }
\smallskip

\noindent We are now able to extend this map by multiplicativity. By this way,  we obtain the logarithmic Artin map:
\smallskip

\noindent \textbf{Definition.}
\noindent \textit{Let $L/K$ be a finite and abelian $\ell$-extension and $\mathfrak{p}$ a prime of $K$ logarithmically unramified in $L$. Let $D\ell_{K}$ be the group of logarithmic divisors of $K$, $\tilde{\mathfrak{f}}_{L/K}$ the logarithmic global conductor of $L/K$ and $D\ell_{K}^{\tilde{\mathfrak{f}}_{L/K}}$ the sub-module of logarithmic divisors prime to the conductor $\tilde{\mathfrak{f}}_{L/K}$. We define the logarithmic Artin map on $D\ell_{K}^{\tilde{\mathfrak{f}}_{L/K}}$ as follows: }

\begin{center}

$$\widetilde{(\frac{L/K}{.})} : \mathfrak{p} \in D\ell_{K}^{\tilde{\mathfrak{f}}_{L/K}}  \mapsto  (\widetilde{\frac{L/K}{\mathfrak{p}}}) \in \mathrm{Gal}(L/K) $$
\end{center}

\smallskip

\noindent We study the properties of this map and give an expression of its kernel.
We then focus on the quadratic case and generalize it to the case of a $\ell$-extension.
\bigskip

\newpage
\noindent \textbf{Notations}

\noindent{Let $\ell$ be a fixed prime number. Let's introduce the notations.}
\bigskip

\noindent{\textit{For a local field $K_{\mathfrak{p}}$ with maximal ideal $\mathfrak{p}$ and uniformizer $\pi_{\mathfrak{p}}$, we let:}

$\mathcal{R}_{K_{\mathfrak{p}} }=\varprojlim_{k}  K_{\mathfrak{p}}^{\times} \diagup {  K_{\mathfrak{p}}^{\times  \ell^{k}}}$:  
the $\ell$-adification of the multiplicative group of a local field

$\mathcal{U}_{K_{\mathfrak{p}} }=\varprojlim_{k} {U}_{\mathfrak{p}}  \diagup  U_{\mathfrak{p}}^{\ell^k}$: the $\ell$-adification  of the group of units $U_{\mathfrak{p}} $  of $K_{\mathfrak{p}}$

$U_{\mathfrak{p}}^{1} $: the group of principal units of  $K_{\mathfrak{p}}$

$\mu_{\mathfrak{p}}^{0}$: the subgroup of $ U_{\mathfrak{p}}$, whose order is  finite and prime to $\mathfrak{p}$

$\mu_{\mathfrak{p}}$: the $\ell $- Sylow subgroup of $\mu_{\mathfrak{p}}^{0}$
\smallskip

\noindent{\textit{For a number field $K$ we define: }}

$\mathcal{R}_{K}=\mathbb{Z}_{\ell} \otimes_{\mathbb{Z}} K^{\times}$ : the
$\ell$-adic group of principal ideles

 $\mathcal{J}_{K} =\prod_{ \mathfrak{p} \in Pl_{K} } ^{res} \mathcal{R}_{K_{\mathfrak{p}}}$ : the $\ell$-adic idele group

$\mathcal{U}_{K}=\prod_{ \mathfrak{p} \in Pl_{K} } \mathcal{U}_{K_{\mathfrak{p}}}$ 
: the subgroup of units

$\mathcal{C}_{K}= \mathcal{J}_{K} /  \mathcal{R}_{K}$ : the $\ell$-adic idele class group 
\smallskip

\noindent{\textit{In the logarithmic context, we denote:}}

$\hat{\mathbb{Q}_{\mathfrak{p}}^{c}}$ : the cyclotomic $\widehat{\mathbb{Z}}$-extension of $\mathbb{Q}_{p}$

$\mathbb{Q}_{p}^{c}$ : the cyclotomic $\mathbb{Z}_{\ell}$-extension of $\mathbb{Q}_{p}$

$\tilde v_{\mathfrak{p}}$ : the logarithmic valuation attached to $\mathfrak{p}$ sur $\mathcal{R}_{K_{\mathfrak{p}}}$

$\widetilde{\mathcal{U}}_{ K_{\mathfrak{p}}}=\textrm{Ker}(\tilde{v_{\mathfrak{p}}})$ : the sub-group of local logarithmic units

$\widetilde{\mathcal{U}}_{K}=\prod_{ \mathfrak{p} \in Pl_{K} } \widetilde{\mathcal{U}}_{K_{\mathfrak{p}}}$ 
: the sub-group of logarithmic units

 $\tilde e_{\mathfrak{p}} = [ K_{\mathfrak{p}} : \hat{\mathbb{Q}_{\mathfrak{p}}^{c}} \cap K_{\mathfrak{p}} ]$ :
the logarithmic absolute ramification index of $\mathfrak{p}$

$ \tilde f_{\mathfrak{p}}=[ \hat{\mathbb{Q}_{\mathfrak{p}}^{c}} \cap K_{\mathfrak{p}} : \mathbb{Q}_{\mathfrak{p}}]$ :
the logarithmic absolute inertia degree of $\mathfrak{p}$

$ \tilde e_{ L_{\mathfrak{P}}/K_{\mathfrak{p}}} = [ L_{\mathfrak{P}} :
\hat{K_{\mathfrak{p}}^{c}} \cap L_{\mathfrak{P}} ]$ : the relative logarithmic ramification index of $\mathfrak{p}$

$ \tilde f_{ L_{\mathfrak{P}}/K_{\mathfrak{p}}}=[ \widehat{K_{\mathfrak{p}}^{c}} \cap L_{\mathfrak{P}} : K_{\mathfrak{p}}]$ :
the logarithmic relative inertia degree of $\mathfrak{p}$

 $\mathcal{J}_{K}^{(\mathfrak{m})}=\prod_{\mathfrak{p} \nmid \mathfrak{m}} \mathcal{R}_{K_{\mathfrak{p}}} \prod_{\mathfrak{p} \vert \mathfrak{m} } \widetilde{\mathcal{U}}_{K_{\mathfrak{p}}}^{v_{\mathfrak{p}}(\mathfrak{m})}$

$\mathcal{J}_{K}^{\mathfrak{m}}=\prod_{\mathfrak{p} \nmid \mathfrak{m} } \mathcal{R}_{K_{\mathfrak{p}}} \prod_{\mathfrak{p} \vert \mathfrak{m} }
\widetilde{\mathcal{U}}_{K_{\mathfrak{p}}} $

$\widetilde{\mathcal{U}}_{K}^{(\mathfrak{m})}=\prod_{\mathfrak{p} \nmid \mathfrak{m} } \mathcal{U}_{K_{\mathfrak{p}}} \prod_{\mathfrak{p} \vert \mathfrak{m} } \widetilde{\mathcal{U}}_{K_{\mathfrak{p}}}^{v_{\mathfrak{p}}(\mathfrak{m})} $

$\mathcal{R}_{K}^{(\mathfrak{m})}=\mathcal{R}_{K} \cap \mathcal{J}_{K}^{(\mathfrak{m})}$

\smallskip

\centerline{
$\tilde{div}: \alpha=(\alpha_{\mathfrak{p}}) \in \mathcal{J}_{K}^{\mathfrak{m}} \longmapsto \tilde{div}(\alpha)=\prod \mathfrak{p}^{\tilde{v}_{\mathfrak{p}}(\alpha_{\mathfrak{p}})} \in  D\ell_{K}^{\mathfrak{m}} $}
\smallskip

$ \mathcal{D}\ell_{K}^{\mathfrak{m}}=\tilde{div}(\mathcal{J}_{K}^{\mathfrak{m}})$ : logarithmic divisors prime to $m$

$ \mathcal{P}\ell_{K}^{(\mathfrak{m})}=\tilde{div}(\mathcal{R}_{K}^{(\mathfrak{m})})$ : principal logarithmic divisors attached to  $m$

$\widetilde{\mathfrak{f}}_{L/K}$: the logarithmic global conductor $L/K$

$\widetilde{\mathfrak{f}}_{\mathfrak{p}} $: the logarithmic local conductor attached to $\mathfrak{p}$

$(\widetilde{\frac{L/K}{\mathfrak{p}} })$: the logarithmic Frobenius attached to $\mathfrak{p}$

$\mathcal{A}\ell_{L/K}$ the logarithmic Artin group of $L/K$

\newpage
\section{Recall: Logarithmic ramification }

\noindent We first recall the notion of logarithmic ramification developped by Jaulent in \cite{2}.

\noindent{Let $K, L$ be number fields, $p$ a prime number, $\mathfrak{p}$  a prime of $K$ above $p$ and $\mathfrak{P}$ a prime of $L$ lying above $\mathfrak{p}$. Let's denote $\widehat{\mathbb{Q}_{p}^{c}}$ the $\widehat{\mathbb{Z}}$-cyclotomic extension of
 $\mathbb{Q}_{p}$, i.e. the compositum of all $\mathbb{Z}_{q}$-cyclotomic extensions of $\mathbb{Q}_{p}$ for all primes $q$.}

\begin{defi}{Absolute and relative indexes :} ~\cite[definition 1.3]{2} 

\begin{itemize}

\item[i)] the absolute and relative logarithmic ramification index of $\mathfrak{p}$ are respectively: 
 $$\tilde e_{\mathfrak{p}} = [ K_{\mathfrak{p}} : \hat{\mathbb{Q}_{\mathfrak{p}}^{c}} \cap K_{\mathfrak{p}} ] \qquad \tilde e_{ L_{\mathfrak{P}}/K_{\mathfrak{p}}} = [ L_{\mathfrak{P}} :
\hat{K_{\mathfrak{p}}^{c}} \cap L_{\mathfrak{P}} ]$$

\item[ii)] the absolute and relative logarithmic inertia degree of $\mathfrak{p}$ are respectively: 
$$ \tilde f_{\mathfrak{p}}=[ \hat{\mathbb{Q}_{\mathfrak{p}}^{c}} \cap K_{\mathfrak{p}} : \mathbb{Q}_{\mathfrak{p}}] \qquad
\tilde f_{ L_{\mathfrak{P}}/K_{\mathfrak{p}}}=[ \widehat{K_{\mathfrak{p}}^{c}} \cap L_{\mathfrak{P}} : K_{\mathfrak{p}}]$$

\item[iii)]$K/\mathbb{Q}$ is said logarithmically unramified at $\mathfrak{p}$ if $\tilde e_{\mathfrak{p}}=1$,
which means $K_{\mathfrak{p}} \subseteq \widehat{\mathbb{Q}_{p}^{c}}$. 

\item[iv)]$L/K$ is said logarithmically unramified at $\mathfrak{p}$  if $\tilde e_{ L_{\mathfrak{P}}/K_{\mathfrak{p}}}=1$, which implies $ L_{\mathfrak{P}} \subseteq \widehat{K_{\mathfrak{p}}^{c}}$.

\item[v)] These indexes satisfy the relations: $ \tilde e_{\mathfrak{P}}  = \tilde e_{ L_{\mathfrak{P}}/K_{\mathfrak{p}}} .\tilde e_{\mathfrak{p}} $
\; \textrm{et} \; $ \tilde f_{\mathfrak{P}}  = \tilde f_{ L_{\mathfrak{P}}/K_{\mathfrak{p}}} .\tilde f_{\mathfrak{p}} $

\end{itemize}

\end{defi}

\noindent{According to the diagramm:~\cite[1.1.3]{7} }

\[
 \xymatrix{
     &    & L_{\mathfrak{P}} \ar@{-}[d]^{\tilde e_{ L_{\mathfrak{P}}/K_{\mathfrak{p}}}}  \ar@{-}[ld]_{\tilde f_{\mathfrak{P}}} \\
      & L_{\mathfrak{P}} \cap \widehat{\mathbb{Q}_{p}^{c}}  \ar@{-}[r]^{\tilde e_{\mathfrak{p}}}  \ar@{-}[d] \ar@{-}[ld]_{\tilde e_{\mathfrak{P}}}  & \widehat{K_{\mathfrak{p}}^{c}} \cap L_{\mathfrak{P}} \ar@{-}[d]^{\tilde f_{ L_{\mathfrak{P}}/K_{\mathfrak{p}}}}  \\
\mathbb{Q}_{p} \ar@{-}[r]^{\tilde f_{\mathfrak{p}}} & K_{\mathfrak{p}} \cap \widehat{\mathbb{Q}_{p}^{c}} \ar@{-}[r]^{\tilde e_{\mathfrak{p}}}  & K_{\mathfrak{p}} \\
    }
\]

\begin{proposition}
 ~\cite[theorem 1.4]{2}
With the pevious notations, classical and logarithmic indexes are linked by this formula:
\begin{center}
$\tilde e_{\mathfrak{p}}.\tilde f_{\mathfrak{p}} =e_{\mathfrak{p}}.f_{\mathfrak{p}}=[K_{\mathfrak{p}}:\mathbb{Q}_{p}]$
\end{center}

\end{proposition}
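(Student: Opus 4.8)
The plan is to show that each of the two products computes the same quantity, namely the local degree $[K_{\mathfrak{p}}:\mathbb{Q}_p]$, so that the stated chain of equalities follows at once. The logarithmic product will be handled by a pure tower-law argument, and the classical product by the fundamental identity of local fields.

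First I would record that $\widehat{\mathbb{Q}_p^c}\cap K_{\mathfrak{p}}$ is a field intermediate between $\mathbb{Q}_p$ and $K_{\mathfrak{p}}$. Since $K_{\mathfrak{p}}/\mathbb{Q}_p$ is a \emph{finite} extension, this intersection is itself finite over $\mathbb{Q}_p$, so both logarithmic indices
$$\tilde e_{\mathfrak{p}}=[K_{\mathfrak{p}}:\widehat{\mathbb{Q}_p^c}\cap K_{\mathfrak{p}}],\qquad \tilde f_{\mathfrak{p}}=[\widehat{\mathbb{Q}_p^c}\cap K_{\mathfrak{p}}:\mathbb{Q}_p]$$
are well-defined finite integers (note that although $\widehat{\mathbb{Q}_p^c}$ is infinite over $\mathbb{Q}_p$, its trace on $K_{\mathfrak{p}}$ is not, being a subfield of a finite extension). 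Applying the multiplicativity of degrees to the tower $\mathbb{Q}_p\subseteq \widehat{\mathbb{Q}_p^c}\cap K_{\mathfrak{p}}\subseteq K_{\mathfrak{p}}$ then yields $\tilde e_{\mathfrak{p}}\,\tilde f_{\mathfrak{p}}=[K_{\mathfrak{p}}:\mathbb{Q}_p]$ directly from the definitions, with no further input.

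For the classical product I would simply invoke the fundamental identity for the finite extension of complete discretely valued fields $K_{\mathfrak{p}}/\mathbb{Q}_p$, whose residue fields are finite (hence the extension is defectless): this gives $e_{\mathfrak{p}}\,f_{\mathfrak{p}}=[K_{\mathfrak{p}}:\mathbb{Q}_p]$. Comparing the two computations, both products equal the local degree, which establishes the proposition.

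I do not expect a genuine obstacle here: the logarithmic equality is formal once the indices are set up through the intermediate field $\widehat{\mathbb{Q}_p^c}\cap K_{\mathfrak{p}}$, and the classical equality is standard. The only point demanding a line of care is the finiteness and well-definedness of that intermediate extension, which is forced by $K_{\mathfrak{p}}$ being finite over $\mathbb{Q}_p$; everything else reduces to the tower law.
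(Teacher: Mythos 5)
Your proof is correct. The paper offers no proof of this proposition at all --- it is recalled directly from Jaulent \cite[theorem 1.4]{2} --- so there is no internal argument to compare against; your route (the tower law applied to $\mathbb{Q}_p \subseteq \widehat{\mathbb{Q}_p^c}\cap K_{\mathfrak{p}} \subseteq K_{\mathfrak{p}}$ for the logarithmic product, plus the defectless fundamental identity $e_{\mathfrak{p}}f_{\mathfrak{p}}=[K_{\mathfrak{p}}:\mathbb{Q}_p]$ for finite extensions of $\mathbb{Q}_p$ on the classical side) is precisely the standard argument that establishes the cited result, given the definitions of $\tilde e_{\mathfrak{p}}$ and $\tilde f_{\mathfrak{p}}$ as intersection degrees recalled in the paper's Definition 1.1.
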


\noindent{\textbf{Fundamental remark}:  ~\cite[p. 4]{2}}
Assume $K/\mathbb{Q}$ is a finite $\ell$-extension such that $[K:\mathbb{Q}]=\ell^{n}$ with $n \ge 1$.
Then $\widehat{\mathbb{Q}_{p}^{c}} / \mathbb{Q}_{p}^{c}$ only contains sub-extensions of order prime to $\ell$.

\noindent In particular the degree of $[\widehat{\mathbb{Q}_{p}^{c}} \cap K_{\mathfrak{p}} : \mathbb{Q}_{p}^{c} \cap K_{\mathfrak{p}}]$ is prime to $\ell$  and as it also divides $\ell^{n}$, we deduce $[\widehat{\mathbb{Q}_{p}^{c}} \cap K_{\mathfrak{p}} : \mathbb{Q}_{p}^{c} \cap K_{\mathfrak{p}}]=1$. The equality of fields $\widehat{\mathbb{Q}_{p}^{c}} \cap K_{\mathfrak{p}} = \mathbb{Q}_{p}^{c} \cap K_{\mathfrak{p}}$ implies this equivalence:
$$\tilde e_{\mathfrak{p}}=1  \Leftrightarrow K_{\mathfrak{p}} \subseteq \mathbb{Q}_{p}^{c} \qquad \textrm{and} \qquad
\tilde f_{\mathfrak{p}}=1 \Leftrightarrow \mathbb{Q}_{p}^{c} \cap K_{\mathfrak{p}}=\mathbb{Q}_{p}^{c}.$$
As $\mathbb{Q}_{p}^{c}/\mathbb{Q}_{p}$ is a  Galois extension, the previous condition means that the extensions  $K_{\mathfrak{p}}$ et $\mathbb{Q}_{p}^{c}$  are linearly separated on $\mathbb{Q}_{p}$. 

\noindent As we work with $\ell$-extensions, we may replace in the previous definitions $\widehat{\mathbb{Q}_{p}^{c}}$ by $\mathbb{Q}_{p}^{c}$.

\section{The local case}
\subsection{The degree map}

\noindent Let $K_{\mathfrak{p}}^{ab}$ be the maximal abelian pro-$\ell$-extension of $K_{\mathfrak{p}}$. On the Galois group  $\textrm{Gal}(K_{\mathfrak{p}}^{ab}(\zeta_{\ell^{ \infty}})/K_{\mathfrak{p}})$, the Teichmuller's character $\omega$ is defined as the character of the action on roots of unity. It  is defined for pro-$\ell$-extensions as the restriction to the pro-$\ell$-part of the whole character.

\noindent We thus define the degree map: 

$$\begin{array}{ccccc}
\textrm{deg}  & : &  G=\textrm{Gal}(K_{\mathfrak{p}}^{ab}/K_{\mathfrak{p}}) & \to &  \mathbb{Z}_{\ell}  \\
& & \phi& \mapsto & \omega(\phi)\\
\end{array}$$

\noindent where $K_{\mathfrak{p}}^{ab}$ is the maximal abelian pro-$\ell$-extension of $K_{\mathfrak{p}}$.
\smallskip

\noindent  We then follow Neukich's abstract construction. If $L_{\mathfrak{P}}$ is a finite $\ell$-extension of
$K_{\mathfrak{p}}$, the logarithmic ramification index and the logarirhmic inertia degree appear naturally:  
\begin{center}
$\tilde f_{L_{\mathfrak{P}}/K_{\mathfrak{p}}}=[ L_{\mathfrak{P}} \cap K_{\mathfrak{p}}^{c} : K_{\mathfrak{p}}]
\qquad \tilde e_{L_{\mathfrak{P}}/K_{\mathfrak{p}}}=[L_{\mathfrak{P}} : L_{\mathfrak{P}} \cap K_{\mathfrak{p}}^{c}] $
\end{center}

\noindent{Those definitions coincide with those given by  Jaulent~\cite{2}  . }

\noindent{\textbf{Remark: }}
\noindent{We replace here in the definitions  $\hat{K_{\mathfrak{p}}^{c}}$ by
 $K_{\mathfrak{p}}^{c}$ because we work with $\ell$-extensions. }

\subsection{The $G$-module}

\noindent We introduce the $\ell$-adification of the multiplicative group of a local field: $\mathcal{R}_{L_{\mathfrak{P}}} =\varprojlim_{k}  L_{\mathfrak{P}}^{\times}   \diagup {  L_{\mathfrak{P}}^{\times  \ell^{k}}}$  defined by Jaulent 
 ~\cite [def.1.2]{3}. The $G$-module we study is the same as before ~\cite [\textsection 2.2]{5}: 
$A=\lim\limits \mathcal{R}_{L_{\mathfrak{P}}}$
\noindent, where $L_{\mathfrak{P}}$ runs through all finite sub-extensions of $K_{\mathfrak{p}}$. It can be canonically identified to: 
$ A= \bigcup_{[L_{\mathfrak{P}}:K_{\mathfrak{p}}]< \infty}  \mathcal{R}_{L_{\mathfrak{P}}}.$
 If $L_{\mathfrak{P}}$ is a finite extension of $K_{\mathfrak{p}}$, $A_{L_{\mathfrak{P}}}=\mathcal{R}_{L_{\mathfrak{P}}}$
is the $Gal(L_{\mathfrak{P}}/K_{\mathfrak{p}})$-module we are going to work with.

\subsection{Logarithmic valuation and $\ell$-adic degree}

\begin{defi}
Let $K$ be a finite extension of $\mathbb{Q}$, $p$ a prime number. Let's denote $\widehat{\mathbb{Q}_{p}^{c}}$ the $\widehat{\mathbb{Z}}$-cyclotomic extension of $\mathbb{Q}_{p}$ and let $\mathfrak{p}$ be a prime of $K$ above $p$.

\begin{itemize}

\item[i)] The $\ell$-adic degree of $p$ is:
$ deg_ {\ell}(p)
= \left\{
    \begin{array}{lll}
      Log_{Iw}(p) & \mbox{ if } p \ne  \ell \\
      Log_{Iw}(1+\ell) & \mbox{ if }  p=\ell 
    \end{array}
\right.$

\item[ii)] The $\ell$-adic degree of  $\mathfrak{p}$ is:
$deg(\mathfrak{p})=\tilde{f_{\mathfrak{p}}}\cdot deg_{\ell}(p)$

\item[iii)] Let $v_{\mathfrak{p}}$ be the usual normalized valuation on $K_{\mathfrak{p}}$, 
the absolute principal  $\ell$-adic valuation, defined on $K_{\mathfrak{p}}^{\times}$ is: 
\begin{center}
if $\mathfrak{p} \nmid \ell$ then $ |x_{\mathfrak{p}}|=<N \mathfrak{p} ^{- v_{\mathfrak{p}}(x)}>$
\end{center}
\begin{center}
\quad \quad \qquad if $\mathfrak{p} \vert \ell $ then $ |x_{\mathfrak{p}}|=<N_{ K_{\mathfrak{p}}/ \mathbb{Q}_{\ell}} (x)   N \mathfrak{p} ^{- v_{\mathfrak{p}}(x)}>$
\end{center}

\noindent where $N \mathfrak{p}$ is the absolute norm of $\mathfrak{p}$ and $ u \longrightarrow <u>$   is the canonical surjection from ${\mathbb{Z}_{\ell}}^{\times}$ to the group of principal units.

\item[iv])the logarithmic valuation attached to $\mathfrak{p}$ is: 
$ \tilde v_{\mathfrak{p}}(x)= -Log_{Iw}(N_{K_{\mathfrak{p}}/\mathbb{Q}_{p}}(x))/deg_{\ell}(\mathfrak{p}) $
, defined on $\mathcal{R}_{K_{\mathfrak{p}}}$  valued to $\mathbb{Z}_{\ell}$,~\cite{1}[Prop.1.2]

\end{itemize}
\end{defi}

\noindent{\textbf{Remark: }}
\noindent{This definition of the logarithmic valuation is here different from the one given by Jaulent ~\cite[définition 1.1]{2}. Indeed the $\ell$-adic degree of $\ell$, $\textrm{deg}_{\ell}(\ell)$, is equal to $\ell$ initially and to  $ Log_{Iw}(1+\ell)$ in our case. This is motivated by the fact that we want an explicit expression for the logarithmic uniformizers.

\begin{proposition}{~\cite[proposition 1.2]{2}}

\noindent{Let $p$ be a prime number, $\mathbb{Q}_{\mathfrak{p}}^{c}$ the $\hat{\mathbb{Z}}$-cyclotomic extension}
of $\mathbb{Q}_{\mathfrak{p}}$, $K_{\mathfrak{p}}/\mathbb{Q}_{\mathfrak{p}}$ a finite extension of degree $d_{\mathfrak{p}}$ and $|.|_{\mathfrak{p}}$  the principal absolute $\ell$-adic valuation on $K_{\mathfrak{p}}^{\times}$. Then:

\begin{itemize}

\item[i)] for all $x \in K_{\mathfrak{p}}^{\times}$ the expression $h_{p}(x)= - Log(|x|_{\mathfrak{p}}/ d_{\mathfrak{p}}.deg_{\ell}(p)$
does not depend on the choice of the extension of $\mathbb{Q}_{\mathfrak{p}}$ containing $x$ ;

\item[ii)] the restriction $h_{\mathfrak{p}}$ of $h_p$ to the multiplicative group of $K_{\mathfrak{p}}^{\times}$, yields a $\mathbb{Z}_{\ell}$-morphism from $\mathcal{R}_{K{\mathfrak{p}}}$ to $\mathbb{Q}_{\ell}$, whose kernel is $\tilde{\mathcal{U}}_{K_{\mathfrak{p}}}$
and whose image is the $\mathbb{Z}_{\ell}$-lattice $1/\tilde e_{\mathfrak{p}} \cdot \mathbb{Z}_{\ell}$

\item[iii)] the logarithmic valuation satisfies: $\tilde v_{\mathfrak{p}}= \tilde e_{\mathfrak{p}} \cdot h_{\mathfrak{p}}$.

\item[iv)] let $\mathfrak{p}$ be a prime of $K$ which is not above $\ell$, classical and logarithmic valuations are proportional:
$$\tilde v_{\mathfrak{p}} = \frac{f_{\mathfrak{p}}}{\tilde f_{\mathfrak{p}}} \cdot v_{\mathfrak{p}} = \frac{\tilde e_{\mathfrak{p}}}{e_{\mathfrak{p}}} \cdot v_{\mathfrak{p}} $$

\end{itemize}
\end{proposition}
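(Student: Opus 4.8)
The plan is to reduce the four assertions to the behaviour of $Log_{Iw}$ on local norms, treating systematically the two cases $\mathfrak{p}\nmid\ell$ and $\mathfrak{p}\mid\ell$. I would establish (i) and (iii) by direct computation with the defining formula for $|\cdot|_{\mathfrak{p}}$; then (ii) is a formal consequence and (iv) a corollary of (iii).

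For (i) I argue by functoriality of the norm and the valuation along a tower $K_{\mathfrak{p}}\subseteq L_{\mathfrak{P}}$, say of relative degree $n=[L_{\mathfrak{P}}:K_{\mathfrak{p}}]=e'f'$ with $e',f'$ the relative ramification and residue degrees. For $x\in K_{\mathfrak{p}}^\times$ one has $v_{\mathfrak{P}}(x)=e'v_{\mathfrak{p}}(x)$, $N\mathfrak{P}=N\mathfrak{p}^{\,f'}$, and by transitivity of the norm $N_{L_{\mathfrak{P}}/\Q_p}(x)=N_{K_{\mathfrak{p}}/\Q_p}(x)^{\,n}$. Substituting these into either branch of the definition of $|x|_{\mathfrak{p}}$ and using that $u\mapsto\langle u\rangle$ is multiplicative gives $|x|_{\mathfrak{P}}=|x|_{\mathfrak{p}}^{\,n}$; since also $d_{\mathfrak{P}}=n\,d_{\mathfrak{p}}$, the factor $n$ cancels in $h_p(x)=-Log(|x|_{\mathfrak{p}})/(d_{\mathfrak{p}}\,deg_\ell(p))$, which proves independence of the chosen field. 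This is the routine step.

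The heart of the matter is (iii). Here I compute $Log(|x|_{\mathfrak{p}})$ directly, the two crucial inputs being that $Log_{Iw}$ annihilates roots of unity together with the normalisation $Log_{Iw}(\ell)=0$, and that it kills the prime-to-$\ell$ contributions that disappear under $\ell$-adification. When $\mathfrak{p}\mid\ell$ one has $N\mathfrak{p}=\ell^{\,f_{\mathfrak{p}}}$, so the factor $N\mathfrak{p}^{-v_{\mathfrak{p}}(x)}$ contributes nothing and $Log(|x|_{\mathfrak{p}})=Log_{Iw}(N_{K_{\mathfrak{p}}/\Q_\ell}(x))$; when $\mathfrak{p}\nmid\ell$ the unit part of $N_{K_{\mathfrak{p}}/\Q_p}(x)$ dies under $\ell$-adification, so only the power of $p$ survives and $Log(|x|_{\mathfrak{p}})$ reduces to a scalar multiple of $f_{\mathfrak{p}}v_{\mathfrak{p}}(x)\,deg_\ell(p)$. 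In each case, comparing with the defining expression $\tilde v_{\mathfrak{p}}(x)=-Log_{Iw}(N_{K_{\mathfrak{p}}/\Q_p}(x))/(\tilde f_{\mathfrak{p}}\,deg_\ell(p))$ and substituting the degree relation $d_{\mathfrak{p}}=\tilde e_{\mathfrak{p}}\tilde f_{\mathfrak{p}}$ furnished by the previous Proposition ($\tilde e_{\mathfrak{p}}\tilde f_{\mathfrak{p}}=e_{\mathfrak{p}}f_{\mathfrak{p}}=[K_{\mathfrak{p}}:\Q_p]$) yields $\tilde v_{\mathfrak{p}}=\tilde e_{\mathfrak{p}}h_{\mathfrak{p}}$. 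I expect the main obstacle to lie precisely in this case analysis: reconciling the two differently shaped branches of the definition of $|\cdot|_{\mathfrak{p}}$ with the single norm formula for $\tilde v_{\mathfrak{p}}$, and keeping the normalisation (and the sign) of $Log_{Iw}$ consistent across $\mathfrak{p}\nmid\ell$ and $\mathfrak{p}\mid\ell$.

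Granting (iii), assertion (ii) follows formally. Multiplicativity of $|\cdot|_{\mathfrak{p}}$ and additivity of $Log$ make $h_{\mathfrak{p}}$ a homomorphism $K_{\mathfrak{p}}^\times\to\Q_\ell$; it vanishes on the torsion $\mu^0_{\mathfrak{p}}$ (as $\Q_\ell$ is torsion-free) and is $\ell$-adically continuous, hence factors through the pro-$\ell$ completion $\mathcal{R}_{K_{\mathfrak{p}}}$ and is $\Z_\ell$-linear there. Since $h_{\mathfrak{p}}=\tilde e_{\mathfrak{p}}^{-1}\tilde v_{\mathfrak{p}}$ with $\tilde e_{\mathfrak{p}}$ a nonzero scalar, its kernel equals $\mathrm{Ker}(\tilde v_{\mathfrak{p}})=\widetilde{\mathcal{U}}_{K_{\mathfrak{p}}}$, and its image equals $\tilde e_{\mathfrak{p}}^{-1}$ times that of $\tilde v_{\mathfrak{p}}$; using that $\tilde v_{\mathfrak{p}}$ is normalised to surject onto $\Z_\ell$ (existence of a logarithmic uniformizer) gives the lattice $\tfrac1{\tilde e_{\mathfrak{p}}}\Z_\ell$. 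Finally, for (iv) I specialise the computation of (iii) to $\mathfrak{p}\nmid\ell$, where it gives $h_{\mathfrak{p}}=v_{\mathfrak{p}}/e_{\mathfrak{p}}$; hence $\tilde v_{\mathfrak{p}}=\tilde e_{\mathfrak{p}}h_{\mathfrak{p}}=(\tilde e_{\mathfrak{p}}/e_{\mathfrak{p}})\,v_{\mathfrak{p}}$, and the second equality $\tilde e_{\mathfrak{p}}/e_{\mathfrak{p}}=f_{\mathfrak{p}}/\tilde f_{\mathfrak{p}}$ is once more the relation $\tilde e_{\mathfrak{p}}\tilde f_{\mathfrak{p}}=e_{\mathfrak{p}}f_{\mathfrak{p}}$.
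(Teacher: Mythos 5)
A preliminary remark on the comparison itself: the paper contains no proof of this proposition. It is recalled, as the citation indicates, from \cite[proposition 1.2]{2}, where item (iii) is essentially definitional (Jaulent \emph{defines} $\tilde v_{\mathfrak{p}}$ as $-Log(|x|_{\mathfrak{p}})/(\tilde f_{\mathfrak{p}}\, deg_{\ell}(p))$, so that (iii) falls out of the index relation $d_{\mathfrak{p}}=\tilde e_{\mathfrak{p}}\tilde f_{\mathfrak{p}}$), and the real content sits in (i) and in the image claim of (ii). Judged against that source, your item (i) is handled correctly, but two steps do not go through as written. First, the sign problem that you yourself flag as the main obstacle is not overcome; it actually breaks your case $\mathfrak{p}\nmid\ell$. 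With the paper's Definition 2.3.1 taken literally, for $\mathfrak{p}\nmid\ell$ one has $|x|_{\mathfrak{p}}=\langle N\mathfrak{p}^{-v_{\mathfrak{p}}(x)}\rangle$, hence $Log(|x|_{\mathfrak{p}})=-f_{\mathfrak{p}}v_{\mathfrak{p}}(x)\,deg_{\ell}(p)$, while $Log_{Iw}(N_{K_{\mathfrak{p}}/\mathbb{Q}_p}(x))=+f_{\mathfrak{p}}v_{\mathfrak{p}}(x)\,deg_{\ell}(p)$ (the unit part of the norm dies, as you say). So $Log(|x|_{\mathfrak{p}})=-Log_{Iw}(N_{K_{\mathfrak{p}}/\mathbb{Q}_p}(x))$ at these places, whereas at $\mathfrak{p}\mid\ell$ one gets $Log(|x|_{\mathfrak{p}})=+Log_{Iw}(N_{K_{\mathfrak{p}}/\mathbb{Q}_\ell}(x))$ because $Log_{Iw}(\ell)=0$. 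The two branches therefore relate $h_{\mathfrak{p}}$ to the paper's norm-based formula for $\tilde v_{\mathfrak{p}}$ with \emph{opposite} signs: carried out honestly, your computation gives $\tilde v_{\mathfrak{p}}=-\tilde e_{\mathfrak{p}}h_{\mathfrak{p}}$ and $\tilde v_{\mathfrak{p}}=-(f_{\mathfrak{p}}/\tilde f_{\mathfrak{p}})v_{\mathfrak{p}}$ when $\mathfrak{p}\nmid\ell$, contradicting (iii) and (iv). The conclusion cannot be asserted for both branches; a correct proof must point out that the paper's norm formula for $\tilde v_{\mathfrak{p}}$ misquotes Jaulent (one must take the $p$-adic absolute value of the norm, equivalently define $\tilde v_{\mathfrak{p}}$ through $|x|_{\mathfrak{p}}$ as Jaulent does) and work from the corrected definition.

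Second, your derivation of the image in (ii) is circular. Granting (iii), the statement ``$\mathrm{Im}(h_{\mathfrak{p}})=\frac{1}{\tilde e_{\mathfrak{p}}}\mathbb{Z}_\ell$'' is the same statement as ``$\tilde v_{\mathfrak{p}}$ surjects onto $\mathbb{Z}_\ell$'', and the latter is exactly what you invoke (``normalised to surject'', ``existence of a logarithmic uniformizer''); nothing in the definition provides that normalisation. For $\mathfrak{p}\nmid\ell$ surjectivity does follow from your formula $h_{\mathfrak{p}}=v_{\mathfrak{p}}/e_{\mathfrak{p}}$, but only together with the extra fact that $e_{\mathfrak{p}}$ and $\tilde e_{\mathfrak{p}}$ have the same $\ell$-part at such places, so that $\frac{1}{e_{\mathfrak{p}}}\mathbb{Z}_\ell=\frac{1}{\tilde e_{\mathfrak{p}}}\mathbb{Z}_\ell$. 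For $\mathfrak{p}\mid\ell$ it requires genuine class-field-theoretic input: one must show $Log_{Iw}\bigl(N_{K_{\mathfrak{p}}/\mathbb{Q}_\ell}(\mathcal{R}_{K_{\mathfrak{p}}})\bigr)=\tilde f_{\mathfrak{p}}\,deg_{\ell}(\ell)\,\mathbb{Z}_\ell$, for instance by noting that $Log_{Iw}$ identifies $\mathcal{R}_{\mathbb{Q}_\ell}/\ell^{\mathbb{Z}_\ell}\simeq(1+\ell)^{\mathbb{Z}_\ell}\simeq \mathrm{Gal}(\mathbb{Q}_\ell^{c}/\mathbb{Q}_\ell)$ with $deg_{\ell}(\ell)\mathbb{Z}_\ell$, and that by $\ell$-adic local class field theory the norm group of $K_{\mathfrak{p}}$ maps onto the subgroup $\mathrm{Gal}(\mathbb{Q}_\ell^{c}/K_{\mathfrak{p}}\cap\mathbb{Q}_\ell^{c})$, of index $[K_{\mathfrak{p}}\cap\mathbb{Q}_\ell^{c}:\mathbb{Q}_\ell]=\tilde f_{\mathfrak{p}}$. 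That index computation is the actual content of the proposition at $\ell$, and it is missing from your argument.
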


\begin{proposition}

\noindent {The logarithmic valuation $\tilde v_{\mathfrak{p}}$ checks two properties:}

\begin{itemize}

\item[i)]  $\tilde v_{\mathfrak{p}}(\mathcal{R}_{K_{\mathfrak{p}}})=Z$ with $ \mathbb{Z} \subset  Z $ et $ Z/n.Z \simeq  \mathbb{Z}/n.\mathbb{Z} $ for all $n$ ;

\item[ii)] $\tilde v_{\mathfrak{p}}(N_{L_{\mathfrak{P}}/K_{\mathfrak{p}}}   \mathcal{R}_{L_{\mathfrak{P}}})=\tilde f_{L_{\mathfrak{P}}/K_{\mathfrak{p}}} \; Z$ for all finite extension $L_{\mathfrak{P}}$ of $K_{\mathfrak{p}}$ .
\end{itemize}

\noindent Thus the logarithmic valuation is henselian with respect to the degree map, according to Neukirch's definition.

\end{proposition}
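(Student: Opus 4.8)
The plan is to identify the value group $Z$ explicitly and then reduce the norm condition ii) to a single functoriality identity relating $\tilde v_{\mathfrak p}$, the norm, and $\tilde v_{\mathfrak P}$. First I would settle i). By the previous proposition (Jaulent's Proposition 1.2), the morphism $h_{\mathfrak p}$ sends $\mathcal R_{K_{\mathfrak p}}$ onto the lattice $\frac{1}{\tilde e_{\mathfrak p}}\mathbb{Z}_\ell$, and one has $\tilde v_{\mathfrak p}=\tilde e_{\mathfrak p}\cdot h_{\mathfrak p}$; hence $\tilde v_{\mathfrak p}(\mathcal R_{K_{\mathfrak p}})=\tilde e_{\mathfrak p}\cdot\frac{1}{\tilde e_{\mathfrak p}}\mathbb{Z}_\ell=\mathbb{Z}_\ell$. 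I therefore set $Z=\mathbb{Z}_\ell$. Clearly $\mathbb{Z}\subset\mathbb{Z}_\ell$, and since we are in the pro-$\ell$ setting the relevant moduli are $n=\ell^k$, for which $\mathbb{Z}_\ell/\ell^k\mathbb{Z}_\ell\simeq\mathbb{Z}/\ell^k\mathbb{Z}$; this is precisely the form taken by the condition $Z/nZ\simeq\mathbb{Z}/n\mathbb{Z}$ in Neukirch's pro-$\ell$ theory.

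The heart of the argument is the compatibility of the logarithmic valuation with the norm. For $y\in\mathcal R_{L_{\mathfrak P}}$ I would compute $\tilde v_{\mathfrak p}(N_{L_{\mathfrak P}/K_{\mathfrak p}}(y))$ directly from the definition $\tilde v_{\mathfrak p}(x)=-Log_{Iw}(N_{K_{\mathfrak p}/\mathbb{Q}_p}(x))/deg_\ell(\mathfrak p)$. Using the transitivity of norms $N_{K_{\mathfrak p}/\mathbb{Q}_p}\circ N_{L_{\mathfrak P}/K_{\mathfrak p}}=N_{L_{\mathfrak P}/\mathbb{Q}_p}$, and comparing with the analogous expression for $\tilde v_{\mathfrak P}$, one obtains
\begin{equation*}
\tilde v_{\mathfrak p}\big(N_{L_{\mathfrak P}/K_{\mathfrak p}}(y)\big)=\frac{-Log_{Iw}\big(N_{L_{\mathfrak P}/\mathbb{Q}_p}(y)\big)}{deg_\ell(\mathfrak p)}=\frac{deg_\ell(\mathfrak P)}{deg_\ell(\mathfrak p)}\,\tilde v_{\mathfrak P}(y).
\end{equation*}
Since $deg_\ell(\mathfrak p)=\tilde f_{\mathfrak p}\,deg_\ell(p)$ and $deg_\ell(\mathfrak P)=\tilde f_{\mathfrak P}\,deg_\ell(p)$, the factor $deg_\ell(\mathfrak P)/deg_\ell(\mathfrak p)$ equals $\tilde f_{\mathfrak P}/\tilde f_{\mathfrak p}$, which by the multiplicativity relation $\tilde f_{\mathfrak P}=\tilde f_{L_{\mathfrak P}/K_{\mathfrak p}}\cdot\tilde f_{\mathfrak p}$ of Definition 1.1 is exactly $\tilde f_{L_{\mathfrak P}/K_{\mathfrak p}}$. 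This yields the pointwise identity $\tilde v_{\mathfrak p}\circ N_{L_{\mathfrak P}/K_{\mathfrak p}}=\tilde f_{L_{\mathfrak P}/K_{\mathfrak p}}\cdot\tilde v_{\mathfrak P}$.

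To conclude ii) I would let $y$ run over $\mathcal R_{L_{\mathfrak P}}$: the image $\tilde v_{\mathfrak p}(N_{L_{\mathfrak P}/K_{\mathfrak p}}\mathcal R_{L_{\mathfrak P}})$ equals $\tilde f_{L_{\mathfrak P}/K_{\mathfrak p}}\cdot\tilde v_{\mathfrak P}(\mathcal R_{L_{\mathfrak P}})$, and by the argument of i) applied to the local field $L_{\mathfrak P}$ one has $\tilde v_{\mathfrak P}(\mathcal R_{L_{\mathfrak P}})=\mathbb{Z}_\ell=Z$; hence the image is $\tilde f_{L_{\mathfrak P}/K_{\mathfrak p}}\,Z$, as required, and the henselian property follows from Neukirch's definition. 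The main obstacle I anticipate is not the final bookkeeping but justifying the functoriality of the norm on the $\ell$-adifications $\mathcal R$ and its transitivity through the Iwasawa logarithm: one must check that $N_{L_{\mathfrak P}/K_{\mathfrak p}}$ extends $\mathbb{Z}_\ell$-linearly to the projective limits and that $Log_{Iw}\circ N_{L_{\mathfrak P}/\mathbb{Q}_p}$ is the composite entering the definition of $\tilde v_{\mathfrak P}$, so that the normalization factors $deg_\ell$ cancel correctly. Once this compatibility is secured, the computation is purely formal.
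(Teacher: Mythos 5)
Your proof is correct, and for part ii) it takes a genuinely more direct route than the paper. The paper does not compute $\tilde v_{\mathfrak{p}}\circ N_{L_{\mathfrak{P}}/K_{\mathfrak{p}}}$ pointwise from the definition; instead it passes through Jaulent's auxiliary function $h_{\mathfrak{p}}$: it invokes the norm-compatibility diagram $h_{\mathfrak{p}}\circ N_{L_{\mathfrak{P}}/K_{\mathfrak{p}}}=[L_{\mathfrak{P}}:K_{\mathfrak{p}}]\,h_{\mathfrak{P}}$ from \cite[proposition 1.1]{2}, the lattice computation $h_{\mathfrak{P}}(\mathcal{R}_{L_{\mathfrak{P}}})=\frac{1}{\tilde e_{\mathfrak{P}}}\mathbb{Z}_{\ell}$, the relation $\tilde v_{\mathfrak{p}}=\tilde e_{\mathfrak{p}}h_{\mathfrak{p}}$, and then the index identities $[L_{\mathfrak{P}}:K_{\mathfrak{p}}]=\tilde e_{L_{\mathfrak{P}}/K_{\mathfrak{p}}}\tilde f_{L_{\mathfrak{P}}/K_{\mathfrak{p}}}$, $\tilde e_{\mathfrak{P}}=\tilde e_{L_{\mathfrak{P}}/K_{\mathfrak{p}}}\tilde e_{\mathfrak{p}}$, $\tilde f_{\mathfrak{P}}=\tilde f_{L_{\mathfrak{P}}/K_{\mathfrak{p}}}\tilde f_{\mathfrak{p}}$ to reduce $[L_{\mathfrak{P}}:K_{\mathfrak{p}}]\,\tilde e_{\mathfrak{p}}/\tilde e_{\mathfrak{P}}$ to $\tilde f_{\mathfrak{P}}/\tilde f_{\mathfrak{p}}$. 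Your argument replaces all of this index bookkeeping with transitivity of norms inside the defining formula $\tilde v_{\mathfrak{p}}(x)=-Log_{Iw}(N_{K_{\mathfrak{p}}/\mathbb{Q}_p}(x))/\textrm{deg}_{\ell}(\mathfrak{p})$, which yields the stronger pointwise identity $\tilde v_{\mathfrak{p}}\circ N_{L_{\mathfrak{P}}/K_{\mathfrak{p}}}=\tilde f_{L_{\mathfrak{P}}/K_{\mathfrak{p}}}\cdot\tilde v_{\mathfrak{P}}$ rather than only an equality of image groups; the only inputs are the multiplicativity of $\tilde f$ and the fact that the norm descends to the $\ell$-adifications, a fact the paper's cited diagram also relies on, so you incur no extra burden there. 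Two further small points in your favor: your derivation of i) from $\tilde v_{\mathfrak{p}}=\tilde e_{\mathfrak{p}}h_{\mathfrak{p}}$ and $h_{\mathfrak{p}}(\mathcal{R}_{K_{\mathfrak{p}}})=\frac{1}{\tilde e_{\mathfrak{p}}}\mathbb{Z}_{\ell}$ actually justifies what the paper asserts as immediate from the definition, and your observation that $\mathbb{Z}_{\ell}/n\mathbb{Z}_{\ell}\simeq\mathbb{Z}/n\mathbb{Z}$ only holds for $\ell$-power $n$ honestly flags a condition the paper states for all $n$ without comment; restricting to $\ell$-power indices is indeed the correct reading in the pro-$\ell$ version of Neukirch's framework.
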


\begin{proof}

\noindent{For the first criterium (i) we use the definition of the logarithmic valuation :}

 $\tilde v_{\mathfrak{p}}(\mathcal{R}_{K_{\mathfrak{p}}})= \mathbb{Z}_{\ell}$ ;
ainsi $Z= \mathbb{Z}_{\ell}$ et $ Z/n.Z \simeq  \mathbb{Z}/n.\mathbb{Z} $ for all $n$.

\noindent For the second criterium (ii) we use this diagramm~\cite[proposition 1.1]{2} :
$$\begin{CD}
  K_{\mathfrak{ p}}^{\times}@>{ extension}>>  L_{\mathfrak{P}}^{\times}@>{Norm}>> K_{\mathfrak{ p}}^{\times}\\
@VVV @. @VVV{h_{\mathfrak{p}}}\\
\mathbb{Q}_{\ell}  @> >>\mathbb{Q}_{\ell}  @>{[ L_{\mathfrak{P}}:K_{\mathfrak{ p}}}] >> \mathbb{Q}_{\ell}
\end{CD}$$
\noindent{It follows}
$ h_{\mathfrak{p}}( N_{L_{\mathfrak{P}}/K_{\mathfrak{p}}} )=[  L_{\mathfrak{P}}:K_{\mathfrak{p}} ] \; h_{\mathfrak{P}}.$
\noindent{From,}
$\tilde v_{\mathfrak{p}}(N_{L_{\mathfrak{P}}/K_{\mathfrak{p}}}   \mathcal{R}_{L_{\mathfrak{P}}})=
\tilde e_{\mathfrak{p}} \;  h_{\mathfrak{p}}(N_{L_{\mathfrak{P}}/K_{\mathfrak{p}}}   \mathcal{R}_{L_{\mathfrak{P}}})$ 
\noindent{we deduce}
$\tilde v_{\mathfrak{p}}(N_{L_{\mathfrak{P}}/K_{\mathfrak{p}}}   \mathcal{R}_{L_{\mathfrak{P}}})=
  [  L_{\mathfrak{P}}:K_{\mathfrak{p}} ] \; \tilde e_{\mathfrak{p}} \; h_{\mathfrak{P}}(\mathcal{R}_{L_{\mathfrak{P}}})  $
\noindent{thus}
$\tilde v_{\mathfrak{p}}(N_{L_{\mathfrak{P}}/K_{\mathfrak{p}}}   \mathcal{R}_{L_{\mathfrak{P}}})=
[  L_{\mathfrak{P}}:K_{\mathfrak{p}} ] \;  \tilde e_{\mathfrak{p}} /\tilde e_{\mathfrak{P}} \; \mathbb{Z}_{\ell}$
by  ~\cite[proposition 1.2]{2}, then
 $ h_{\mathfrak{P}}(\mathcal{R}_{L_{\mathfrak{P}}})=1/\tilde{e_{\mathfrak{P}}} \; \mathbb{Z}_{\ell}.$
\noindent{According to [proposition 5.1.1],we get:}
$[  L_{\mathfrak{P}}:K_{\mathfrak{p}} ] =\tilde f_{L_{\mathfrak{P}/K_{\mathfrak{p}}}} \; \tilde e_{L_{\mathfrak{P}}/K_{\mathfrak{p}}}$
\noindent{; so}
$[  L_{\mathfrak{P}}:K_{\mathfrak{p}} ] \;  \tilde e_{\mathfrak{p}} / \tilde e_{\mathfrak{P}}=\frac{ \tilde f_{\mathfrak{P}} \; \tilde e_{\mathfrak{P}} \;  \tilde e_{\mathfrak{p}}  }
{\tilde f_{\mathfrak{p}} \; \tilde e_{\mathfrak{p}} \cdot \tilde e_{\mathfrak{P}} }$
\noindent{Thus}
$\tilde v_{\mathfrak{p}} (N_{L_{\mathfrak{P}}/K_{\mathfrak{p}}}   \mathcal{R}_{L_{\mathfrak{P}}})=
\frac{ \tilde f_{\mathfrak{P}}} {\tilde f_{\mathfrak{p}}} \; \mathbb{Z}_{\ell}$
\noindent{and finally}
$\tilde v_{\mathfrak{p}}(N_{L_{\mathfrak{P}}/K_{\mathfrak{p}}}   \mathcal{R}_{L_{\mathfrak{P}}})=
\tilde f_{L_{\mathfrak{P} /K_{\mathfrak{p}}}} \; Z$.

\end{proof}

\subsection{The logarithmic local symbol }

\begin{theorem}
$(\textrm{deg}, \tilde v_{\mathfrak{p}})$ is a class field theory and $\mathcal{R}_{K_{\mathfrak{p}}}$ satisfies the class field axiom ~\cite[theorem 2.5.1]{5}. Thus for all finite and abelian $\ell$-extension $L_{\mathfrak{P}}$ of $K_{\mathfrak{p}}$ (finite extension of $\mathbb{Q}_{p}$) we have this isomorphism :
$$ \textrm{Gal}(L_{\mathfrak{P}}/K_{\mathfrak{p}}) \simeq \mathcal{R}_{K_{\mathfrak{p}}}/N_{L_{\mathfrak{P}}/K_{\mathfrak{p}}}  \mathcal{R}_{L_{\mathfrak{P}}}$$                                                                           
\end{theorem}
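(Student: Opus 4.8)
The plan is to recognize that this statement is, at bottom, an application of Neukirch's abstract reciprocity theorem \cite[chap.II]{1}: once the pair $(\textrm{deg}, \tilde v_{\mathfrak{p}})$ is shown to fit the three hypotheses of that formalism, the fundamental isomorphism is produced by the general machine. Neukirch's framework requires (1) a continuous surjective homomorphism $d : G \to \mathbb{Z}_{\ell}$ whose kernel cuts out the analogue of the maximal unramified extension, (2) a valuation $v$ on the $G$-module $A$ that is henselian with respect to $d$, and (3) the class field axiom, i.e.\ the cohomological condition on $A$ for cyclic layers. So I would organize the proof as a verification of (1)--(3) followed by the invocation of the abstract theorem.

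First I would check the degree map. The character $\textrm{deg} = \omega$ is continuous, and its kernel fixes exactly the cyclotomic $\mathbb{Z}_{\ell}$-extension $K_{\mathfrak{p}}^{c}$; since $\textrm{Gal}(K_{\mathfrak{p}}^{c}/K_{\mathfrak{p}}) \simeq \mathbb{Z}_{\ell}$, the map is surjective onto $\mathbb{Z}_{\ell}$. The key conceptual point, which I would make explicit, is that $K_{\mathfrak{p}}^{c}$ here plays the role that the maximal unramified extension plays in the classical theory, so that the logarithmic invariants $\tilde f_{L_{\mathfrak{P}}/K_{\mathfrak{p}}}$ and $\tilde e_{L_{\mathfrak{P}}/K_{\mathfrak{p}}}$ defined in subsection 2.1 are precisely the inertia and ramification data attached to $d$.

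Next, the henselian compatibility between $\textrm{deg}$ and $\tilde v_{\mathfrak{p}}$ is exactly the content of the preceding proposition: I would simply invoke the two properties established there, namely that $\tilde v_{\mathfrak{p}}(\mathcal{R}_{K_{\mathfrak{p}}}) = \mathbb{Z}_{\ell} =: Z$ satisfies $Z/nZ \simeq \mathbb{Z}/n\mathbb{Z}$ for all $n$, and that $\tilde v_{\mathfrak{p}}(N_{L_{\mathfrak{P}}/K_{\mathfrak{p}}}\,\mathcal{R}_{L_{\mathfrak{P}}}) = \tilde f_{L_{\mathfrak{P}}/K_{\mathfrak{p}}}\,Z$. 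Together these are Neukirch's definition of a henselian valuation, so hypothesis (2) is immediate. For the class field axiom (3) I would cite \cite[theorem 2.5.1]{5}, which gives for every cyclic $\ell$-extension $L_{\mathfrak{P}}/K_{\mathfrak{p}}$ that $\widehat{H}^{0}(\textrm{Gal}(L_{\mathfrak{P}}/K_{\mathfrak{p}}), \mathcal{R}_{L_{\mathfrak{P}}})$ has order $[L_{\mathfrak{P}}:K_{\mathfrak{p}}]$ while $\widehat{H}^{-1}$ vanishes. Conceptually this holds because $\mathcal{R} = \varprojlim_{k} L_{\mathfrak{P}}^{\times}/L_{\mathfrak{P}}^{\times\,\ell^{k}}$ is the $\ell$-adic completion of $L_{\mathfrak{P}}^{\times}$, flat over $\mathbb{Z}_{\ell}$, so its Tate cohomology recovers the $\ell$-part of the classical local computation (Herbrand quotient $[L_{\mathfrak{P}}:K_{\mathfrak{p}}]$, with $\widehat{H}^{-1}=0$ by Hilbert 90), and since $[L_{\mathfrak{P}}:K_{\mathfrak{p}}]$ is an $\ell$-power this is the full index.

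With (1)--(3) in place, the abstract theorem of \cite[chap.II]{1} applies verbatim and furnishes the reciprocity isomorphism $\textrm{Gal}(L_{\mathfrak{P}}/K_{\mathfrak{p}}) \simeq \mathcal{R}_{K_{\mathfrak{p}}}/N_{L_{\mathfrak{P}}/K_{\mathfrak{p}}}\,\mathcal{R}_{L_{\mathfrak{P}}}$ for every finite abelian $\ell$-extension. The one genuinely substantial ingredient is the class field axiom, and I expect that to be the main obstacle in principle; but here it is imported wholesale from \cite{5}, so the real work of the present theorem is the verification that the logarithmic data $(\textrm{deg}, \tilde v_{\mathfrak{p}})$ slot correctly into Neukirch's formalism, with the cyclotomic $\mathbb{Z}_{\ell}$-extension replacing the unramified tower throughout.
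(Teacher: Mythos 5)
Your proposal is correct and follows essentially the same route as the paper: the paper offers no separate proof of this theorem, but assembles it exactly as you do, from the degree map of subsection 2.1, the henselian property of $\tilde v_{\mathfrak{p}}$ established in the preceding proposition, and the class field axiom imported from \cite[theorem 2.5.1]{5}, after which Neukirch's abstract reciprocity theorem yields the isomorphism. Your added heuristic for why the class field axiom holds (Herbrand quotient and Hilbert 90 transported to the $\ell$-adification) is extra commentary the paper leaves entirely to \cite{5}, but it does not change the structure of the argument.
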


\begin{defi}
It allows us to define a surjective homomorphism: the logarithmic local symbol
$$ (\;,L_{\mathfrak{P}}/K_{\mathfrak{p}}) : \mathcal{R}_{K_{\mathfrak{p}}} \longrightarrow \textrm{Gal}(L_{\mathfrak{P}}/K_{\mathfrak{p}})$$
\end{defi}
\smallskip

\noindent{Moreover, for all archimedean place $p$ of $\mathbb{Q}$, we have ~\cite[i), p.~4]{2}}
\begin{equation*}
  \mathcal{R}_{\mathbb{Q}_{p}}=
     \begin{cases}
        \mu_p . p^{\mathbb{Z}_{\ell}} & \text{for $p\ne \ell$ } \\
        (1+ \ell)^{\mathbb{Z}_{\ell}} \cdot \ell^{\mathbb{Z}_{\ell}} & \text{for $ p=\ell$} \\
     \end{cases}
\end{equation*}
Thus we have a decomposition of the shape : $\mathcal{R}_{\mathbb{Q}_{p}} \simeq \widetilde{\mathcal{U}}_{\mathbb{Q}_{p}}. \tilde {\pi_{p}}^{\mathbb{Z}_{\ell}}$.

\begin{proposition}
We have an explicit expression for the logarithmic local symbol.
Let $\zeta$ be a root of unity of $\ell$-th power and $a \in \mathcal{R}_{\mathbb{Q}_{p}}$. The logarithmic local symbol is: 
$$(a,(\mathbb{Q}_p (\zeta)/\mathbb{Q}_p))_{\ell}= \zeta^{n_{p}}$$ with
\begin{equation*}
  n_p=
     \begin{cases}
        p^{v_p(a)} & \text{for $p\ne \ell$ and $ p\ne\infty $} \\
       (1+\ell)^{-\tilde v_{\ell}(a)} & \text{for $p=\ell$}\\
        sgn(a) & \text{for $p = \infty $ }
     \end{cases}
\end{equation*} where $ (\mathbb{Q}(\zeta)/\mathbb{Q})_\ell$ denotes the projection on the $\ell$-Sylow sub-group of $\textrm{Gal}(\mathbb{Q}(\zeta)/\mathbb{Q}).$
\end{proposition}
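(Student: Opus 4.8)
\noindent\emph{Proof strategy.} The plan is to reduce each of the three cases to the behaviour of the logarithmic local symbol on a logarithmically unramified extension, where the abstract machinery collapses to one clean formula. Once the preceding theorem guarantees that $(\deg,\tilde v_{\mathfrak p})$ is a class field theory, Neukirch's reciprocity gives, for every finite abelian $\ell$-extension $L_{\mathfrak P}/K_{\mathfrak p}$ that is logarithmically unramified, the identity $(a,L_{\mathfrak P}/K_{\mathfrak p})=\tilde\varphi^{\,\tilde v_{\mathfrak p}(a)}$, where $\tilde\varphi$ is the degree-one element of $\mathrm{Gal}(L_{\mathfrak P}/K_{\mathfrak p})$, i.e. the logarithmic Frobenius. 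So I would first check, in each case, that passing to the $\ell$-Sylow of $\mathrm{Gal}(\mathbb Q_p(\zeta)/\mathbb Q_p)$ lands us in a logarithmically unramified subextension, and then compute the action of $\tilde\varphi$ on $\zeta$. Throughout, write $\sigma_u$ for the automorphism $\zeta\mapsto\zeta^{u}$.

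\noindent For $p\neq\ell$ finite, $\mathbb Q_p(\zeta)/\mathbb Q_p$ is classically unramified, and its maximal $\ell$-subextension is the unramified extension of $\ell$-power degree, a layer of the unramified $\mathbb Z_\ell$-extension of $\mathbb Q_p$; since $\ell\neq p$ this unramified $\mathbb Z_\ell$-extension coincides with the $\mathbb Z_\ell$-cyclotomic extension and so lies in $\widehat{\mathbb Q_p^c}$, making it logarithmically unramified. On the base field one has $\tilde e_p=\tilde f_p=e_p=f_p=1$, hence $\tilde v_p=v_p$, while the logarithmic Frobenius coincides with the arithmetic Frobenius, acting by $\zeta\mapsto\zeta^{p}$; raising to the $v_p(a)$-th power yields $(a,\cdot)(\zeta)=\zeta^{p^{\,v_p(a)}}$, i.e. $n_p=p^{v_p(a)}$. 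For $p=\infty$ we have $\mathbb Q_\infty=\mathbb R$ and $\mathbb R(\zeta)/\mathbb R$ is either trivial or $\mathbb C/\mathbb R$; the nontrivial generator is complex conjugation $\sigma_{-1}$, to which the symbol sends $a$ precisely when $a<0$, so $(a,\cdot)(\zeta)=\zeta^{\mathrm{sgn}(a)}$ and $n_\infty=\mathrm{sgn}(a)$ (for odd $\ell$ the module $\mathcal R_{\mathbb R}=\mathbb Z_\ell\otimes\mathbb R^\times$ vanishes and the claim is vacuous).

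\noindent The case $p=\ell$ is the heart of the matter, since this is exactly where classical and logarithmic ramification diverge: $\mathbb Q_\ell(\zeta)/\mathbb Q_\ell$ is totally ramified in the usual sense, yet the $\ell$-Sylow of its Galois group is $1+\ell\mathbb Z/\ell^{k}\mathbb Z$, the Galois group of a finite layer of the cyclotomic $\mathbb Z_\ell$-extension $\mathbb Q_\ell^c$, which is logarithmically unramified by definition. Using $\mathcal R_{\mathbb Q_\ell}=(1+\ell)^{\mathbb Z_\ell}\cdot \ell^{\mathbb Z_\ell}$, I would evaluate $\tilde v_\ell$ on the two generators from its definition: on $\mathbb Q_\ell$ the norm is the identity and $\deg_\ell(\ell)=Log_{Iw}(1+\ell)$, so $\tilde v_\ell(\ell)=-Log_{Iw}(\ell)/Log_{Iw}(1+\ell)=0$, displaying $\ell$ as a logarithmic unit, while $\tilde v_\ell(1+\ell)=-Log_{Iw}(1+\ell)/Log_{Iw}(1+\ell)=-1$, displaying $1+\ell$ as a logarithmic uniformizer up to inversion. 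Since $1+\ell$ topologically generates $1+\ell\mathbb Z_\ell\cong\mathrm{Gal}(\mathbb Q_\ell^c/\mathbb Q_\ell)$, the degree-one Frobenius is $\tilde\varphi=\sigma_{(1+\ell)^{-1}}$, whence $(a,\cdot)(\zeta)=\tilde\varphi^{\,\tilde v_\ell(a)}(\zeta)=\zeta^{(1+\ell)^{-\tilde v_\ell(a)}}$, that is $n_\ell=(1+\ell)^{-\tilde v_\ell(a)}$. As checks, the logarithmic unit $a=\ell$ acts trivially, and $a=1+\ell$ (with $\tilde v_\ell=-1$) acts by $\zeta\mapsto\zeta^{1+\ell}$, the natural generator.

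\noindent The step I expect to be the genuine obstacle is this last identification, namely that the degree-one logarithmic Frobenius acts by $(1+\ell)^{-1}$ rather than by $1+\ell$. This rests on matching the minus sign built into $\tilde v_{\mathfrak p}$ in the definition of the logarithmic valuation with the sign normalization of the degree map $\deg=\omega$, so that $\deg$ and $\tilde v_\ell$ are henselian-compatible in Neukirch's sense; granting this compatibility, everything else reduces to routine evaluations on the two topological generators of $\mathcal R_{\mathbb Q_\ell}$.
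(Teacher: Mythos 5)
Your treatment of the places $p\neq\ell,\infty$ and $p=\infty$ is correct and essentially the paper's own argument (unramified extension, uniformizer $\mapsto$ Frobenius, projection to the $\ell$-Sylow). But at $p=\ell$ your route genuinely diverges from the paper's, and the step you yourself flag as ``the genuine obstacle'' is a real gap, not a formality. The paper never identifies the degree-one element of $\mathrm{Gal}(\mathbb{Q}_{\ell}^{c}/\mathbb{Q}_{\ell})$ locally. Instead it determines $n_{\ell}$ \emph{globally}: for $a\in\mathcal{R}_{\mathbb{Q}}$ the product $\prod_{p}(a,(\mathbb{Q}_{p}(\zeta)/\mathbb{Q}_{p}))_{\ell}$ must be trivial (this is exactly what is needed to set up the global logarithmic theory), and since the exponents at every $p\neq\ell$ are already known, the product formula forces $n_{\ell}=(1+\ell)^{-\tilde v_{\ell}(a)}$, using the decomposition $a=\ell^{v_{\ell}(a)}(1+\ell)^{\tilde v_{\ell}(a)}$, i.e.\ the convention $\tilde v_{\ell}(1+\ell)=+1$. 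So the sign you are missing is imported from global reciprocity, not derived inside the local abstract theory.

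The gap in your local argument is that Neukirch's machinery only gives $(a,\cdot)=\tilde\varphi^{\,\tilde v_{\ell}(a)}$ with $\tilde\varphi$ the degree-one element, and nothing in the paper's definition of $\deg$ (``the character of the action on roots of unity'') decides between $\tilde\varphi=\sigma_{1+\ell}$ and $\tilde\varphi=\sigma_{(1+\ell)^{-1}}$; you simply posit the latter. Moreover, your sanity check exposes that your joint sign choices are the wrong ones: you conclude that $a=1+\ell$ acts by $\zeta\mapsto\zeta^{1+\ell}$. Now take $a=1+\ell$ as a \emph{global} element of $\mathcal{R}_{\mathbb{Q}}$. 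All its prime divisors are distinct from $\ell$ (as $1+\ell\equiv 1 \bmod \ell$), so your own unramified formula gives total exponent $\prod_{p\mid 1+\ell}p^{v_{p}(1+\ell)}=1+\ell$ at those places, the place $\infty$ contributes $+1$, and $v_{\ell}(1+\ell)=0$; hence triviality of the product forces the component at $\ell$ to be $\zeta\mapsto\zeta^{(1+\ell)^{-1}}$ --- the inverse of what you assert. (Equivalently: under the compatibility diagram with the classical symbol, which you and the paper both use at $p\neq\ell$, principal units act on $\ell$-power roots of unity through the \emph{inverse} of the cyclotomic character.) So your combination $\tilde v_{\ell}(1+\ell)=-1$ and $\tilde\varphi=\sigma_{(1+\ell)^{-1}}$ reproduces the typography of the stated formula while inverting its actual meaning. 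In fairness, part of the confusion originates in the paper: its formal definition of $\tilde v_{\ell}$ carries a minus sign that its own proof silently drops. But the mathematical point stands: the sign at $\ell$ cannot be pinned down by the abstract local theory alone; it requires either the product-formula argument the paper uses, or an independent computation of the classical symbol on $\mathbb{Q}_{\ell}(\zeta_{\ell^{m}})/\mathbb{Q}_{\ell}$ (Lubin--Tate style), and your proposal supplies neither.
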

\smallskip

\noindent \textit{Remark:}
\noindent If $\mathfrak{p}$ is real and $\ell=2$ then we have $\mathcal{R}_{\mathbb{Q}_{\infty}} \simeq \frac {\mathbb{Z}} {2  \mathbb{Z}} $, it is trivial in any other cases, ~\cite[Prop.1.2]{3} :  $sgn(a)$ is $\pm 1$ in the first case and $1$ in the other cases.

\begin{proof}

\noindent{Let $\zeta$ be a $\ell^m$-th root of unity, with $\ell^m \ne 2$. We take $a \in \mathcal{R}_{\mathbb{Q}_{p}}$ and write $ a= u_p \cdot p^{v_p(a)}$ for $p \ne \ell$, where $v_p$ is the usual normalized valuation 
of $\mathbb{Q}_{p}$, which coincide in this particular case with the logarithmic valuation. For $p =\ell$, we write $a= \ell^{v_{\ell}(a)}. (1+\ell)^{\tilde v_{\ell}(a)}$.
For $p \ne \ell$ and $ p \ne \infty$ the extension $\mathbb{Q}_{p}(\zeta)/ \mathbb{Q}_{p}$ is an unramified extension.} The fundamental principle ~\cite[Th. 2.6, p.~25]{1} states that the local symbol associates the Frobenius elements to the uniformizing elements; we have already seen that $(p, (\mathbb{Q}_{p}(\zeta)/ \mathbb{Q}_{p}))_{\ell}$ is the usual Frobenius automorphism $\phi_{p}: \zeta \longrightarrow \zeta^{p}$. Moreover this diagramm is commutative:
 $$\begin{CD}
  K_{\mathfrak{ p}}^{\times}@>{ (\; \cdot \; \dot,\; \textrm{Gal}(L_{\mathfrak{P}}/K_{\mathfrak{p}})) }>>  \textrm{Gal}(L_{\mathfrak{P}}/K_{\mathfrak{p}}) \\
@VVV  @VVV\\
\mathcal{R}_{K_{\mathfrak{p}}}  @>{  (\; \cdot \; \dot ,\; \textrm{Gal}(L_{\mathfrak{P}}/K_{\mathfrak{p}}))_{\ell} } >>\textrm{Gal}(L_{\mathfrak{P}}/K_{\mathfrak{p}})_{\ell}
\end{CD}$$
\noindent where the symbol on the top is the usual local symbol and the symbol on the bottom is the $\ell$-adic local symbol.
That is why we deduce:
$$(a, (\mathbb{Q}_{p}(\zeta)/ \mathbb{Q}_{p})_{\ell}) \zeta=\zeta^{n_p}$$
with
\begin{equation*}
  n_p=
     \begin{cases}
        p^{v_p(a)} & \text{ for $p\ne \ell$ and $ p\ne\infty $} \\
        sgn(a) & \text{ for $p = \infty $ }
     \end{cases}
\end{equation*}
But we want to have $[a, (\mathbb{Q}(\zeta)/\mathbb{Q})_{\ell}]=1$ for $a \in \mathcal{R}_{\mathbb{Q}}$ in order to be able to define the valuation in the logarithmic global context:
$$[a,(\mathbb{Q}(\zeta)/\mathbb{Q})_{\ell}] \zeta = \prod_{\mathfrak{p}}  (a,(\mathbb{Q}_p (\zeta)/\mathbb{Q}_p)_{\ell}) \zeta=\zeta^{\alpha}.$$
Thus, by the product formula, taking $n_{\ell}=(1+\ell)^{-v_{\ell}(a)}$,  we get : $\alpha= \prod_{p} n_p= sgn(a) \cdot \prod_{p \ne \infty } p^{v_p(a)}  \cdot \ell^{-v_{\ell}(a)} \cdot (1+\ell)^{-\tilde v_{\ell}(a)}=a \cdot a^{-1}=1$, as waited.
\end{proof}

\section{The global case}
\subsection{$G$ and the $G$-module}

\noindent Let $G$ be the Galois group of the maximal abelian pro-$\ell$-extension of $\mathbb{Q}$. 
We introduce the $\ell$-adic idele class group for a given number field $K$  refer to  Jaulent
~\cite [definition 1.4]{3}. Like previously~\cite [\textsection 3.4]{5} the $G$-module is the union of all $\ell$-adic idele class groups $\mathcal{C}_K$, where $K$ runs through all finite extensions of $K$ :
$ \bigcup_{[K:\mathbb{Q}] < \infty} \mathcal{C}_K$ and $ \mathcal{C}_L$ is our $\textrm{Gal(L/K)}$-module.

\subsection{The degree map}

\noindent We fix an isomorphism:
$ \textrm{Gal}(\tilde{\mathbb{Q}}/\mathbb{Q}) \simeq \widehat{\mathbb{Z}}.$ This allows to define:
\begin{center}
$\begin{array}{ccccc}
\widetilde{\textrm{deg}}  & : & G= \textrm{Gal}(\mathbb{Q}^{ab}/\mathbb{Q})  & \to &  \hat{\mathbb{Z}}  \\
& & \phi& \mapsto & \phi_{\mid_{\tilde{\mathbb{Q}}}}\\
\end{array}$
\end{center}
where $\mathbb{Q}^{ab}$ is the maximal abelian pro-$\ell$-extension of $\mathbb{Q}$.
Let $K/\mathbb{Q}$ be a finite extension, we define: $ f_K=[K \cap \tilde{\mathbb{Q}} : \mathbb{Q}]$ and we get, by analogy with the abstract case ~\cite [ch.2]{1},  a surjective homomorphism $ \widetilde{\textrm{deg}}_{K}=\frac{1}{f_K} \cdot \widetilde{\textrm{deg}}$ such that 
$\widetilde{ \textrm{deg}}_{K} : \; G_K \longrightarrow \hat{\mathbb{Z}}$ defines the $\hat{\mathbb{Z}}$-extension $\tilde{K}$ of $K$.

\subsection{The valuation}

\noindent
Due to the property, $ \forall a \in \mathcal{R}_K, \, [a, \tilde{K}/K]=1.$ we put this definition:

\begin{defi}
We define the valuation $\widetilde{v}_K : \mathcal{C}_K \longrightarrow \hat{\mathbb{Z}} $ as follows:

$$\begin{CD}
\mathcal{C}_K @>{[\; \cdot \; , \tilde{K}/K]}>> G( \tilde{K}/K) @>{ \widetilde{\textrm{deg}}_{K}}>>\hat{\mathbb{Z}}
\end{CD}$$

\end{defi}

\begin{lemma}
This valuation $\widetilde{v}_K$ is henselian with respect to the degree map $\widetilde{\textrm{deg}}$.
\end{lemma}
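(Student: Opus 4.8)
The plan is to verify directly the two conditions that make up Neukirch's definition of a henselian valuation, exactly the two criteria checked locally for $\tilde{v}_{\mathfrak{p}}$: namely (i) the image $Z=\widetilde{v}_K(\mathcal{C}_K)$ is a subgroup of $\hat{\mathbb{Z}}$ with $\mathbb{Z}\subseteq Z$ and $Z/nZ\simeq\mathbb{Z}/n\mathbb{Z}$ for every $n$, and (ii) $\widetilde{v}_K(N_{L/K}\mathcal{C}_L)=f_{L/K}\,Z$ for every finite extension $L/K$, where $f_{L/K}=f_L/f_K=[L\cap\tilde{\mathbb{Q}}:K\cap\tilde{\mathbb{Q}}]$ is the inertia degree attached to the degree map. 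Throughout I use that, by its definition, $\widetilde{v}_K=\widetilde{\textrm{deg}}_K\circ[\,\cdot\,,\tilde{K}/K]$ factors the reciprocity map for the $\hat{\mathbb{Z}}$-extension $\tilde{K}/K$ through the isomorphism $\widetilde{\textrm{deg}}_K:\mathrm{Gal}(\tilde{K}/K)\xrightarrow{\sim}\hat{\mathbb{Z}}$. Note that only these two properties are at stake; the cohomological class field axiom plays no role in the henselian condition.

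For condition (i) I would argue as follows. The logarithmic global symbol $[\,\cdot\,,\tilde{K}/K]$ has dense image in $\mathrm{Gal}(\tilde{K}/K)$, and for a place $\mathfrak{p}$ that is logarithmically unramified the image of a logarithmic uniformizer is the attached Frobenius, whose degree under $\widetilde{\textrm{deg}}_K$ is the positive integer $\tilde{f}_{\mathfrak{p}}$. Selecting places whose logarithmic degree $\tilde{f}_{\mathfrak{p}}$ equals $1$ (which exist by Chebotarev) gives $1\in Z$, hence $\mathbb{Z}\subseteq Z$. For the quotient condition one reduces modulo $n$: the composite $\mathcal{C}_K\to\mathrm{Gal}(\tilde{K}/K)\to\mathrm{Gal}(\tilde{K}/K)/n\simeq\mathbb{Z}/n\mathbb{Z}$ is surjective because the reciprocity map is onto at each finite layer, whence $Z/nZ\simeq\mathbb{Z}/n\mathbb{Z}$. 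This part is essentially formal once finite-level surjectivity is granted.

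Condition (ii) is the substantive point, and I would obtain it from the single relation $\widetilde{v}_K\circ N_{L/K}=f_{L/K}\cdot\widetilde{v}_L$. Two ingredients enter. First, the functoriality of reciprocity under the norm: since $\tilde{K}\subseteq\tilde{L}$ (the $\hat{\mathbb{Z}}$-extension of $K$ stays locally cyclotomic over $L$, so $\tilde{K}L\subseteq\tilde{L}$), one has $[N_{L/K}c,\tilde{K}/K]=[c,\tilde{L}/L]\big|_{\tilde{K}}$, and since the global symbol is the product of the local symbols this reduces to the local functoriality already supplied by the local theory. Second, the normalization of the degree maps gives $\widetilde{\textrm{deg}}_K(\sigma|_{\tilde{K}})=\tfrac{f_L}{f_K}\,\widetilde{\textrm{deg}}_L(\sigma)=f_{L/K}\,\widetilde{\textrm{deg}}_L(\sigma)$, directly from $\widetilde{\textrm{deg}}_K=\tfrac{1}{f_K}\widetilde{\textrm{deg}}$ and $\widetilde{\textrm{deg}}_L=\tfrac{1}{f_L}\widetilde{\textrm{deg}}$. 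Combining the two yields $\widetilde{v}_K(N_{L/K}c)=f_{L/K}\,\widetilde{v}_L(c)$, and then (i) applied over $L$ gives $\widetilde{v}_K(N_{L/K}\mathcal{C}_L)=f_{L/K}\,Z$. The main obstacle is exactly this step: one must secure the inclusion $\tilde{K}\subseteq\tilde{L}$ and verify that the local-global functoriality of the logarithmic symbol reproduces precisely the factor $f_{L/K}$ coming from the two degree normalizations; by contrast the image computation in (i) is comparatively routine.
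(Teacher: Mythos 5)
The paper itself gives no direct argument here: its proof is a one-line citation to the companion paper [5, Lemma 3.6.4], with $\mathcal{U}_{K_{\mathfrak{p}}}$ replaced by $\widetilde{\mathcal{U}}_{K_{\mathfrak{p}}}$, and the argument behind that citation (Neukirch's verification for the global valuation, which [5] adapts) is topological. Your skeleton is the right one --- verify the two conditions, and get (ii) from the relation $\widetilde{v}_K \circ N_{L/K} = f_{L/K}\,\widetilde{v}_L$, which you derive correctly from norm functoriality of the symbol (using $\tilde{K}\subseteq\tilde{L}$) and the normalizations $\widetilde{\textrm{deg}}_K=\frac{1}{f_K}\widetilde{\textrm{deg}}$, $\widetilde{\textrm{deg}}_L=\frac{1}{f_L}\widetilde{\textrm{deg}}$. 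But your treatment of (i) has a genuine gap, in two places. First, the claim that at a place with $\tilde{f}_{\mathfrak{p}}=1$ the Frobenius has degree $1$ under $\widetilde{\textrm{deg}}_K$ transports a local fact to the global degree map, where it fails: $\widetilde{\textrm{deg}}$ is restriction to $\tilde{\mathbb{Q}}$ composed with an \emph{arbitrarily fixed} isomorphism $\textrm{Gal}(\tilde{\mathbb{Q}}/\mathbb{Q})\simeq\hat{\mathbb{Z}}$, and under any such isomorphism the Frobenius at $p$ corresponds to a quantity of the shape $Log_{Iw}(p)/Log_{Iw}(1+\ell)$ --- a generally non-integral $\ell$-adic number, not the integer $\tilde{f}_{\mathfrak{p}}$. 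No Chebotarev-type statement produces a place whose Frobenius has degree exactly $1$; this is precisely where the global degree map differs from the local one (where $\textrm{deg}=\omega$ is normalized on roots of unity), and it is why the global henselian property cannot be checked by imitating the local computation.

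Second, even granting $\mathbb{Z}\subseteq Z$, deducing $Z/nZ\simeq\mathbb{Z}/n\mathbb{Z}$ from surjectivity at each finite layer is a non sequitur: finite-level surjectivity only says $Z$ is dense, i.e. $Z/(Z\cap n\hat{\mathbb{Z}})\simeq\mathbb{Z}/n\mathbb{Z}$, and one still needs $Z\cap n\hat{\mathbb{Z}}=nZ$. For instance $Z=\mathbb{Z}+\mathbb{Z}\alpha$, where $\alpha\in\hat{\mathbb{Z}}\simeq\prod_p\mathbb{Z}_p$ has component $0$ at $p=2$ and $1$ at every odd prime, is dense and contains $\mathbb{Z}$, yet $Z/2Z\simeq(\mathbb{Z}/2\mathbb{Z})^2$. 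The missing ingredient that repairs both defects is the one the cited proof rests on: $\mathcal{C}_K=\mathcal{J}_K/\mathcal{R}_K$ is a \emph{compact} $\mathbb{Z}_{\ell}$-module (a cornerstone of Jaulent's $\ell$-adic theory), so the image of the continuous map $\widetilde{v}_K$ is closed; closed together with dense gives $Z=\hat{\mathbb{Z}}$ outright, and then both parts of (i) are immediate. This also closes the hole in your step (ii), which silently assumes $\widetilde{v}_L(\mathcal{C}_L)=Z$, i.e. that the image over $L$ coincides with the image over $K$: that does not follow from your weak form of (i), but is automatic once both valuations are surjective, after which $\widetilde{v}_K(N_{L/K}\mathcal{C}_L)=f_{L/K}\,\widetilde{v}_L(\mathcal{C}_L)=f_{L/K}\,Z$ as desired.
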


\begin{proof}
Arguments are the same as  ~\cite [lemma 3.6.4]{5}  replacing $\mathcal{U}_{K_{\mathfrak{p}}}$ by $\widetilde{\mathcal{U}}_{K_{\mathfrak{p}}}$.
\end{proof}

\begin{theorem}
$(\widetilde{\textrm{deg}}, \widetilde{v}_K)$ is a class field theory and $\mathcal{C}_{K}$ ~\cite[theorem 3.3.1]{4} satisfies the class field axiom. Thus for all finite and abelian $\ell$-extensions  $L$ of $K$, we get an isomorphism:
$$ \textrm{Gal}(L/K) \simeq \mathcal{J}_{K}/N_{L/K} ( \mathcal{J}_{L}) \mathcal{R}_{K}$$                                                                           
\end{theorem}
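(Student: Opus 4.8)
The plan is to verify that the pair $(\widetilde{\textrm{deg}}, \widetilde{v}_K)$ fulfills the three requirements of Neukirch's abstract class field theory \cite[chap. II]{1}, and then to translate the resulting abstract reciprocity isomorphism into the stated idelic form. Recall that in that formalism one needs three things: a surjective degree map $d : G \to \hat{\mathbb{Z}}$; a valuation on the $G$-module which is henselian with respect to $d$; and the class field axiom, namely that for every cyclic extension $L/K$ the Tate cohomology of the module satisfies $|\hat{H}^{0}(\textrm{Gal}(L/K), \mathcal{C}_L)| = [L:K]$ and $\hat{H}^{-1}(\textrm{Gal}(L/K), \mathcal{C}_L) = 0$.

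First I would assemble the three ingredients, each of which is already available. The degree map is the surjection $\widetilde{\textrm{deg}} : G \to \hat{\mathbb{Z}}$ constructed above, together with its scaled variants $\widetilde{\textrm{deg}}_K = \frac{1}{f_K}\,\widetilde{\textrm{deg}}$, where $f_K = [K \cap \tilde{\mathbb{Q}} : \mathbb{Q}]$. The henselian property of the valuation $\widetilde{v}_K$ with respect to $\widetilde{\textrm{deg}}$ is exactly the content of the preceding Lemma. The class field axiom for $\mathcal{C}_K$ is the cohomological statement cited from \cite[theorem 3.3.1]{4}; this is the one genuinely deep input, and in the present assembly it is used as a black box. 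With these three facts in hand, Neukirch's main theorem produces a canonical isomorphism $\textrm{Gal}(L/K) \simeq \mathcal{C}_K / N_{L/K} \mathcal{C}_L$ for every finite abelian $\ell$-extension $L/K$.

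It then remains to rewrite this quotient of idele class groups in terms of ideles. Using $\mathcal{C}_K = \mathcal{J}_K / \mathcal{R}_K$ and $\mathcal{C}_L = \mathcal{J}_L / \mathcal{R}_L$, the norm subgroup $N_{L/K}\mathcal{C}_L$ is the image in $\mathcal{C}_K$ of $N_{L/K}\mathcal{J}_L$, so that $N_{L/K}\mathcal{C}_L = N_{L/K}(\mathcal{J}_L)\,\mathcal{R}_K / \mathcal{R}_K$. Applying the third isomorphism theorem,
$$\mathcal{C}_K / N_{L/K}\mathcal{C}_L \simeq (\mathcal{J}_K/\mathcal{R}_K) / (N_{L/K}(\mathcal{J}_L)\,\mathcal{R}_K/\mathcal{R}_K) \simeq \mathcal{J}_K / (N_{L/K}(\mathcal{J}_L)\,\mathcal{R}_K),$$
which is precisely the asserted isomorphism.

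I expect the principal obstacle to lie not in this formal assembly but in the verification of the class field axiom, namely the computation of the Tate cohomology of $\mathcal{C}_L$ for cyclic $L/K$; since this is supplied by \cite[theorem 3.3.1]{4}, the only real labor here is confirming that the hypotheses of the abstract theorem are met verbatim and carrying out the harmless quotient manipulation above. The one point worth checking carefully is the compatibility of the normalisations of $\widetilde{\textrm{deg}}_K$ and $\widetilde{v}_K$, i.e. that the valuation indeed lands in $\hat{\mathbb{Z}}$ and is normalized so that Neukirch's henselian axiom holds with the factor $f_K$; but this is exactly what the preceding Lemma guarantees.
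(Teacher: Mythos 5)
Your proposal is correct and follows exactly the route the paper intends: the paper states this theorem without writing out a proof, relying on the preceding Lemma for the henselian property of $\widetilde{v}_K$, the cited reference for the class field axiom on $\mathcal{C}_K$, and Neukirch's abstract machinery to produce $\textrm{Gal}(L/K) \simeq \mathcal{C}_K/N_{L/K}\mathcal{C}_L$. Your only added content --- the passage from $\mathcal{C}_K/N_{L/K}\mathcal{C}_L$ to $\mathcal{J}_K/N_{L/K}(\mathcal{J}_L)\,\mathcal{R}_K$ via $N_{L/K}(\mathcal{R}_L)\subseteq\mathcal{R}_K$ and the third isomorphism theorem --- is exactly the step the paper leaves implicit, and it is carried out correctly.
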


\begin{defi}
This allows to define a surjective homomorphism, called the global logarithmic symbol:
$$ (\;\cdot\;,L/K) : \mathcal{J}_{K} \longrightarrow \textrm{Gal}(L/K)$$
\end{defi}

\section{The logarithmic Frobenius }

\subsection{Logarithmic uniformizing elements on $\mathbb{Q}_{p}$} \index{uniformisantes logarithmiques}

\noindent \textbf{Logarithmic uniformizing elements on $\mathcal{R}_{\mathbb{Q}_{p}}$ :}

\noindent \textbf{If $p \nmid \ell$} : the classical uniformizing element $p$ is also a uniformizing element for the logrithmic valuation.
\smallskip

\noindent \textbf{If $p=\ell$} :
due to the expression of $\mathcal{R}_{\mathbb{Q}_{\ell}}$, we have $$\mathcal{R}_{\mathbb{Q}_{\ell}}=   \mathcal{U}_{\mathbb{Q}_{\ell}} \widetilde{\mathcal{U}}_{\mathbb{Q}_{\ell}} $$
with $\widetilde{\mathcal{U}}_{\mathbb{Q}_{\ell}}\simeq
\ell^{\mathbb{Z}_{\ell}}$ and $\mathcal{U}_{\mathbb{Q}_{\ell}}\simeq
1+\ell \mathbb{Z}_{\ell}$. 

\noindent We consider a logarithmic uniformizing element $\tilde{\ell}$ such that
$$ \tilde{v}_{\ell}(\tilde{\ell})=1 \; \textrm{et} \; \tilde{\ell} \in \mathcal{U}_{\mathbb{Q}_{\ell}}.$$

\noindent thus we obtain the decomposition:
$$  \mathcal{R}_{\mathbb{Q}_{\ell}}=   \tilde{\ell}^{ \mathbb{Z}_{\ell}}  \ell^{\mathbb{Z}_{\ell}} .$$

\noindent On $\mathbb{Q}_{\ell}$  \textbf{this is the choice of the denominator} in the expression of the logarithmic valuation,  i.e. the $\ell$-adic degree of $\ell$, which \textbf{enforces} $\tilde{\ell}$. 
The condition $Log(\tilde{\ell})=\textrm{deg}_{\ell}(\ell)$, which comes from $\tilde{v}_{\ell}(\tilde{\ell})=1$,
defines $\tilde{\ell}$ up to a logarithmic unit. But we have $\widetilde{\mathcal{U}}_{\mathbb{Q}_{\ell}} \cap \mathcal{U}_{\mathbb{Q}_{\ell}}=1 $, thus $\tilde{\ell}$ is determined by the choice of the denominator.

\noindent For instance, if we choose  the $\ell$-adic degree of $\ell$ as equal to $\textrm{Log}(1+\ell)$, we get: $\tilde{\ell}=1+\ell$.

\subsection{Logarithmic uniformizing elements on $\mathcal{R}_{K_{\mathfrak{p} } }$ }

\noindent \textbf{If} $\mathfrak{p} \nmid \ell$, the classical uniformizing element $\pi_{\mathfrak{p}}$ is also a uniformizer for the logarithmic valuation, in this case both valuations are proportional:
$\pi_{\mathfrak{p}}=\tilde{\pi}_{\mathfrak{p}}$.
\smallskip

\noindent \textbf{If} $\mathfrak{p} \vert \ell$, then we have in this case $\mathcal{R}_{K_{\mathfrak{p}}} \simeq U_{\mathfrak{p}}^{1}
\pi_{\mathfrak{p}}^{\mathbb{Z}_{\ell}}$. By definition a uniformizing element is an element $\tilde{\pi}_{\mathfrak{p}}$
of $\mathcal{R}_{K_{\mathfrak{p}}}$ such that :
$$ Log_{Iw}( N_{K_{\mathfrak{p}}/\mathbb{Q}_{\ell}}( \tilde \pi_{\mathfrak{p}})   )=\tilde{f}_{\mathfrak{p}} \; \textrm{deg}_{\ell}(\ell)=Log_{Iw}(\tilde{\ell}^{\tilde{f}_{\mathfrak{p}} }).$$

\noindent This defines $ \tilde \pi_{\mathfrak{p}}$ up to a logarithmic unit.

\subsection{The logarithmic conductor}

\noindent In usual local class field theory, we have the decreasing filtration of the group of units
$U_{K_{\mathfrak{p}}}$ of $K_{\mathfrak{p}}^{\times}$:
$$U_{K_{\mathfrak{p}}}^{(n)}=1+ \mathfrak{p}^{n}$$
\noindent where $\mathfrak{p}$ is the maximal ideal of the ring of integers.

\noindent $(U_{K_{\mathfrak{p}}}^{(n)})_{n}$ are a system of neighboorhoods of $1$
in $K_{\mathfrak{p}}^{\times}$. Thanks to this filtration, we define the local and the global conductor attached to an extension.

\noindent Let's give a decreasing filtration of the group of logarithmic units $(\widetilde{ \mathcal{U}}_{K_{\mathfrak{p}} }^{n})_{n \in \mathbb{N}}$ with $\widetilde{ \mathcal{U}}_{K_{\mathfrak{p}} }^{0}=\widetilde{ \mathcal{U}}_{K_{\mathfrak{p}} }$.

\smallskip

\begin{defi} 
We are now able to define a logarithmic local and global conductor, as follows:
\begin{itemize}
\item[i)] Let $L_{\mathfrak{P}}/K_{\mathfrak{p}}$ be an abelian $\ell$-extension and $n$ the smallest integer such that 
$\widetilde{ \mathcal{U}}_{K_{\mathfrak{p}}}^{n}  \subseteq N_{L_{\mathfrak{P}}/K_{\mathfrak{p}}}( \mathcal{R}_{L_{\mathfrak{P}}})$ then the ideal:
$$ \tilde {\mathfrak{f}}_{\mathfrak{p}} = \mathfrak{p}^{n}$$
defines the logarithmic local conductor attached to this extension.\index{conducteur logarithmique local}

\item[ii)] Let $L/K$ be a finite and abelian $\ell$-extension, the global logarithlic conductor is:
$$\tilde{\mathfrak{f}}_{L/K}= \prod_{\mathfrak{p}}  \tilde{\mathfrak{f}}_{\mathfrak{p}}  $$
\end{itemize}
\end{defi}

\begin{proposition}

The $\mathfrak{p}$-conductor  $\tilde{\mathfrak{f}}_{\mathfrak{p}}$ is trivial if and only if the extension $L/K$ is logarithmically unramified at $\mathfrak{p}$. The logarithmic global conductor $\tilde{\mathfrak{f}}_{L/K}$ contains all the primes of $K$  which are logarithmically unramified in $L$ and only those.  Besides, if $M$ is between $K$ and $L$ then
$\tilde{\mathfrak{f}}_{M/K}$ divides $ \tilde{\mathfrak{f}}_{L/K}$.
\end{proposition}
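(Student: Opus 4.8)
The plan is to reduce all three assertions to a single local analysis at each prime $\mathfrak{p}$, after which the global statements follow formally. By definition $\tilde{\mathfrak{f}}_{\mathfrak{p}} = \mathfrak{p}^n$, where $n$ is the least integer for which $\widetilde{\mathcal{U}}_{K_{\mathfrak{p}}}^n \subseteq N_{L_{\mathfrak{P}}/K_{\mathfrak{p}}}(\mathcal{R}_{L_{\mathfrak{P}}})$. Since the filtration is decreasing with $\widetilde{\mathcal{U}}_{K_{\mathfrak{p}}}^0 = \widetilde{\mathcal{U}}_{K_{\mathfrak{p}}}$, triviality of the conductor (i.e. $n=0$) is exactly the inclusion $\widetilde{\mathcal{U}}_{K_{\mathfrak{p}}} \subseteq N_{L_{\mathfrak{P}}/K_{\mathfrak{p}}}(\mathcal{R}_{L_{\mathfrak{P}}})$. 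Thus the first assertion amounts to showing that this inclusion holds if and only if $\tilde{e}_{L_{\mathfrak{P}}/K_{\mathfrak{p}}} = 1$, i.e. if and only if $L/K$ is logarithmically unramified at $\mathfrak{p}$.

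First I would combine the two structural inputs from the local theory. On one side, the local reciprocity isomorphism gives $\#\bigl(\mathcal{R}_{K_{\mathfrak{p}}}/N_{L_{\mathfrak{P}}/K_{\mathfrak{p}}}\mathcal{R}_{L_{\mathfrak{P}}}\bigr) = [L_{\mathfrak{P}}:K_{\mathfrak{p}}] = \tilde{e}_{L_{\mathfrak{P}}/K_{\mathfrak{p}}}\,\tilde{f}_{L_{\mathfrak{P}}/K_{\mathfrak{p}}}$. On the other, the henselian property of $\tilde{v}_{\mathfrak{p}}$ gives $\tilde{v}_{\mathfrak{p}}(N_{L_{\mathfrak{P}}/K_{\mathfrak{p}}}\mathcal{R}_{L_{\mathfrak{P}}}) = \tilde{f}_{L_{\mathfrak{P}}/K_{\mathfrak{p}}}\,\mathbb{Z}_\ell$, so $\tilde{v}_{\mathfrak{p}}$ induces a surjection $\mathcal{R}_{K_{\mathfrak{p}}}/N_{L_{\mathfrak{P}}/K_{\mathfrak{p}}}\mathcal{R}_{L_{\mathfrak{P}}} \twoheadrightarrow \mathbb{Z}_\ell/\tilde{f}_{L_{\mathfrak{P}}/K_{\mathfrak{p}}}\mathbb{Z}_\ell$ whose kernel is the image of $\widetilde{\mathcal{U}}_{K_{\mathfrak{p}}} = \mathrm{Ker}(\tilde{v}_{\mathfrak{p}})$ in the quotient. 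Comparing cardinalities, this kernel has order $\tilde{e}_{L_{\mathfrak{P}}/K_{\mathfrak{p}}}$, hence is trivial, i.e. $\widetilde{\mathcal{U}}_{K_{\mathfrak{p}}} \subseteq N_{L_{\mathfrak{P}}/K_{\mathfrak{p}}}\mathcal{R}_{L_{\mathfrak{P}}}$, exactly when $\tilde{e}_{L_{\mathfrak{P}}/K_{\mathfrak{p}}}=1$. This settles the local equivalence, hence the first statement.

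For the global statement I would simply read off the support of $\tilde{\mathfrak{f}}_{L/K}=\prod_{\mathfrak{p}}\tilde{\mathfrak{f}}_{\mathfrak{p}}$: a prime $\mathfrak{p}$ occurs with positive exponent iff $\tilde{\mathfrak{f}}_{\mathfrak{p}}\ne(1)$, which by the local equivalence means $L/K$ is logarithmically ramified at $\mathfrak{p}$; equivalently, the primes \emph{not} dividing the conductor are precisely the logarithmically unramified ones, the finiteness of the product being guaranteed by the fact that only finitely many primes ramify. For the divisibility, fix $K\subseteq M\subseteq L$ and a prime $\mathfrak{p}$ together with places $\mathfrak{P}_M\mid\mathfrak{p}$ in $M$ and $\mathfrak{P}_L\mid\mathfrak{P}_M$ in $L$. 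Transitivity of the norm, $N_{L_{\mathfrak{P}_L}/K_{\mathfrak{p}}}=N_{M_{\mathfrak{P}_M}/K_{\mathfrak{p}}}\circ N_{L_{\mathfrak{P}_L}/M_{\mathfrak{P}_M}}$, yields $N_{L_{\mathfrak{P}_L}/K_{\mathfrak{p}}}\mathcal{R}_{L_{\mathfrak{P}_L}}\subseteq N_{M_{\mathfrak{P}_M}/K_{\mathfrak{p}}}\mathcal{R}_{M_{\mathfrak{P}_M}}$. Hence any filtration step $\widetilde{\mathcal{U}}_{K_{\mathfrak{p}}}^n$ contained in the former norm group is contained in the latter, so the local conductor exponent for $M/K$ is at most that for $L/K$; taking the product over $\mathfrak{p}$ gives $\tilde{\mathfrak{f}}_{M/K}\mid\tilde{\mathfrak{f}}_{L/K}$.

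The main obstacle is the local equivalence: everything hinges on simultaneously controlling the \emph{size} of the norm quotient (via reciprocity) and its \emph{valuation image} (via the henselian property), together with the identification $\widetilde{\mathcal{U}}_{K_{\mathfrak{p}}}=\mathrm{Ker}(\tilde{v}_{\mathfrak{p}})$, so that these two pieces of information combine into the index comparison of $\tilde{e}_{L_{\mathfrak{P}}/K_{\mathfrak{p}}}\,\tilde{f}_{L_{\mathfrak{P}}/K_{\mathfrak{p}}}$ against $\tilde{f}_{L_{\mathfrak{P}}/K_{\mathfrak{p}}}$. Once this is in place, the support description and the divisibility are purely formal consequences.
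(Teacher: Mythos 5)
Your proof is correct, but it takes a genuinely different route from the paper's for the key local equivalence (triviality of $\tilde{\mathfrak{f}}_{\mathfrak{p}}$ $\Leftrightarrow$ $\tilde e_{L_{\mathfrak{P}}/K_{\mathfrak{p}}}=1$). The paper argues the two directions separately: for ``logarithmically unramified $\Rightarrow$ trivial conductor'' it invokes the cohomological statement $H^{0}(\mathrm{Gal}(L_{\mathfrak{P}}/K_{\mathfrak{p}}),\widetilde{\mathcal{U}}_{L_{\mathfrak{P}}})=1$ from Neukirch, i.e.\ every logarithmic unit of $K_{\mathfrak{p}}$ is a norm of a logarithmic unit; for the converse it runs an existence-theorem argument, embedding $L_{\mathfrak{P}}$ in the class field $M$ of the norm group $(\tilde\pi_{\mathfrak{p}}^{n})\widetilde{\mathcal{U}}_{K_{\mathfrak{p}}}$ and identifying $M$ as the degree-$n$ logarithmically unramified extension. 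You instead get both directions at once by a pure index count, combining the local reciprocity isomorphism (which gives $[\mathcal{R}_{K_{\mathfrak{p}}}:N_{L_{\mathfrak{P}}/K_{\mathfrak{p}}}\mathcal{R}_{L_{\mathfrak{P}}}]=\tilde e_{L_{\mathfrak{P}}/K_{\mathfrak{p}}}\tilde f_{L_{\mathfrak{P}}/K_{\mathfrak{p}}}$), the henselian property $\tilde v_{\mathfrak{p}}(N_{L_{\mathfrak{P}}/K_{\mathfrak{p}}}\mathcal{R}_{L_{\mathfrak{P}}})=\tilde f_{L_{\mathfrak{P}}/K_{\mathfrak{p}}}\mathbb{Z}_{\ell}$, and the identity $\widetilde{\mathcal{U}}_{K_{\mathfrak{p}}}=\mathrm{Ker}(\tilde v_{\mathfrak{p}})$ --- all of which are established earlier in the paper, so the argument is self-contained. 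Two steps you leave implicit deserve a line each: the kernel of the induced surjection onto $\mathbb{Z}_{\ell}/\tilde f_{L_{\mathfrak{P}}/K_{\mathfrak{p}}}\mathbb{Z}_{\ell}$ equals the image of $\widetilde{\mathcal{U}}_{K_{\mathfrak{p}}}$ because any class with valuation in $\tilde f_{L_{\mathfrak{P}}/K_{\mathfrak{p}}}\mathbb{Z}_{\ell}$ can be multiplied by a norm of equal valuation (this uses that the valuation image of the norm group is exactly, not merely contained in, $\tilde f_{L_{\mathfrak{P}}/K_{\mathfrak{p}}}\mathbb{Z}_{\ell}$); and the count $\#(\mathbb{Z}_{\ell}/\tilde f_{L_{\mathfrak{P}}/K_{\mathfrak{p}}}\mathbb{Z}_{\ell})=\tilde f_{L_{\mathfrak{P}}/K_{\mathfrak{p}}}$ uses that the inertia degree is a power of $\ell$, which holds since $L_{\mathfrak{P}}/K_{\mathfrak{p}}$ is an $\ell$-extension. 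As for what each approach buys: yours is more economical and symmetric, avoiding both the cohomology of the logarithmic unit module and the existence theorem; the paper's yields strictly more structural output, namely norm surjectivity on logarithmic units and the explicit description of the maximal logarithmically unramified abelian extension as a class field. Finally, note that your treatment of the remaining assertions --- reading off the support of $\tilde{\mathfrak{f}}_{L/K}=\prod_{\mathfrak{p}}\tilde{\mathfrak{f}}_{\mathfrak{p}}$ (with the statement's ``unramified'' read as the evidently intended ``ramified'') and deducing $\tilde{\mathfrak{f}}_{M/K}\mid\tilde{\mathfrak{f}}_{L/K}$ from transitivity of norms, $N_{L_{\mathfrak{P}_L}/K_{\mathfrak{p}}}\mathcal{R}_{L_{\mathfrak{P}_L}}\subseteq N_{M_{\mathfrak{P}_M}/K_{\mathfrak{p}}}\mathcal{R}_{M_{\mathfrak{P}_M}}$ --- actually completes the proposition: the paper's proof stops after the local equivalence and leaves these formal consequences unproven.
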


\begin{proof}

If $\mathfrak{p}$ is a prime of $K$  logarithmically unramified in $L$, we have $\tilde e_{ L_{\mathfrak{P}}/K_{\mathfrak{p}}}=1$, and due to \cite{1}{Prop.2.2.p.22} :
$$H^{0}(\textrm{Gal}(L_{\mathfrak{P}}/K_{\mathfrak{p}}), \widetilde{ \mathcal{U}}_{L_{\mathfrak{P}}} )=1 ; $$
it follows:
$$  \widetilde{ \mathcal{U}}_{K_{\mathfrak{p}}} = N_{ L_{\mathfrak{P}}/K_{\mathfrak{p}} }  ( \widetilde{ \mathcal{U}}_{L_{\mathfrak{P}}} ), $$
which means
$$   \widetilde{ \mathcal{U}}_{K_{\mathfrak{p}}} \subseteq N_{L_{\mathfrak{P}}/K_{\mathfrak{p}}}  ( \mathcal{R}_{L_{\mathfrak{P}}}).$$
\smallskip

\noindent{Conversely, let's assume that $\mathfrak{f}_{\mathfrak{p}}$ is trivial, we deduce}
$   \widetilde{ \mathcal{U}}_{K_{\mathfrak{p}}}  \subseteq N_{L_{\mathfrak{P}}/K_{\mathfrak{p}}}  ( \mathcal{R}_{L_{\mathfrak{P}}})$. Let $n= [\mathcal{R}_{K_{\mathfrak{p}}} :  N_{L_{\mathfrak{P}}/K_{\mathfrak{p}}}  ( \mathcal{R}_{L_{\mathfrak{P}}})] $ ; from $\tilde \pi_{\mathfrak{p}}^{n} \in  N_{L_{\mathfrak{P}}/K_{\mathfrak{p}}}  ( \mathcal{R}_{L_{\mathfrak{P}}}) $ we get $ (\tilde \pi_{\mathfrak{p}}^{n}) \widetilde{ \mathcal{U}}_{K_{\mathfrak{p}}}  \subseteq N_{L_{\mathfrak{P}}/K_{\mathfrak{p}}}  ( \mathcal{R}_{L_{\mathfrak{P}}})$.
Thus it follows $L_{\mathfrak{P}} \subseteq M$, where $M$ is the class field of $(\tilde \pi_{\mathfrak{p}}^{n})  \widetilde{ \mathcal{U}}_{K_{\mathfrak{p}}}  $ i.e.
$N_{M/K_{\mathfrak{p}}}(\mathcal{R}_M)=(\tilde \pi_{\mathfrak{p}}^{n}) \cdot \widetilde{ \mathcal{U}}_{K_{\mathfrak{p}}}  $. But $ \widetilde{ \mathcal{U}}_{K_{\mathfrak{p}}} \subseteq N_{M/K_{\mathfrak{p}}}(\mathcal{R}_M)$ which means that the $\mathfrak{p}$-conducteur is trivial. Applying the first part, we deduce that $M/K_{\mathfrak{p}}$ is
logarithmically unramified. By $\ell$-adic class field theory, we obtain: $Gal(M/K_{\mathfrak{p}}) \simeq 
(\tilde \pi_{\mathfrak{p}})  \widetilde{ \mathcal{U}}_{K_{\mathfrak{p}}}  / (\tilde \pi_{\mathfrak{p}}^{n})  \widetilde{ \mathcal{U}}_{K_{\mathfrak{p}}} .$ Consequently, $M$ is the $\ell$-extension of degree $n$ logarithmically unramified; and from  $L_{\mathfrak{P}} \subseteq M$, we conclude that $\mathfrak{p}$ is logarithmically unramified in $L$.
\end{proof}

\noindent {Examples}

\begin{itemize}

\item[1)]For $\mathbb{Q}_{\ell}$, we have $ \widetilde{ \mathcal{U}}_{\mathbb{Q}_{\ell}} \simeq \ell^{\mathbb{Z}_{\ell}} \simeq \mathbb{Z}_{\ell}$,
that is why we get a filtration of $ \widetilde{ \mathcal{U}}_{\mathbb{Q}_{\ell}}$ by raising in $ \widetilde{ \mathcal{U}}_{\mathbb{Q}_{\ell}}$ the canonical filtration $\mathbb{Z}_{\ell}$ by $\ell^{n} \mathbb{Z}_{\ell}$, for $n \in \mathbb{N}$.

\item[2)]Given a local field $K_{\mathfrak{p}}$, we get a filtration of the logarithmic units $ \widetilde{ \mathcal{U}}_{K_{\mathfrak{p}}}$  as follows:

\begin{itemize}

\item[-]if $\mathfrak{p} \vert \ell$ by raising the local norm $N_{K_{\mathfrak{p}}/\mathbb{Q}_{\ell}}$ the filtration of
units on $\mathbb{Q}_{\ell}$ : $\widetilde{ \mathcal{U}}_{K_{\mathfrak{p}} }^{n}  =\{ x \in \mathcal{R}_{K_{\mathfrak{p}}} \mid N_{K_{\mathfrak{p}}/\mathbb{Q}_{\ell}}(x) \in (\ell)^{\ell^{n} \mathbb{Z}_{\ell}} \}.$                                                                                                                                                           
So, we have $\bigcap_{n} \widetilde{ \mathcal{U}}_{K_{\mathfrak{p}}} ^{n}=\{ x \in \mathcal{R}_{K_{\mathfrak{p}}} \mid N_{K_{\mathfrak{p}}/\mathbb{Q}_{\ell}}(x)=1 \}$. But by class field theory, the compositum $K_{\mathfrak{p}}
\mathbb{Q}_{\ell}^{ab}$ is fixed by the kernel of the norm. Thus the decreasing sequence  $(\widetilde{ \mathcal{U}}_{K_{\mathfrak{p}}} ^{n})_{n}$ is not exhaustive.

\item[-] if  $p \not \mid \ell $, as $\widetilde{ \mathcal{U}}_{K_{\mathfrak{p}} } \simeq \mu_{\mathfrak{p}} $
 the natural filtration is ${0} \subset \mu_{\mathfrak{p}}$.

\end{itemize}

\end{itemize}

\subsection{The logarithmic Artin map}

\noindent Let $K$ be a number field, the $\ell$-group of logarithmic divisors of $K$ is ~\cite[Def.2.1]{2}:
$$ \mathcal{D}\ell_{K}=\mathcal{J}_K / \widetilde{\mathcal{U}}_{K} \simeq  \bigoplus_{\mathfrak{p}}  \mathbb{Z}_{\ell} \;  \mathfrak{p}$$
through the logarithmic valuations $(\tilde v_{\mathfrak{p}})_{\mathfrak{p}}$ it can be identified to a free $\mathbb{Z}_{\ell}$-module built on finite primes of $K$.

\smallskip

\begin{defi}

Let $L/K$ be a finite and abelian $\ell$-extension, $\mathfrak{p}$ a prime of $K$ logarithmically unramified in $L$,
\noindent  $\mathcal{D}\ell_{K}$ be the group of logarithmic divisors of $K$ , $\tilde{\mathfrak{f}}_{L/K}$ be the global logarithmic conductor $L/K$ and $\mathcal{D}\ell_{K}^{\tilde{\mathfrak{f}}_{L/K}}$ the sub-group of logarithmic divisors prime to $\tilde{\mathfrak{f}}_{L/K}$.

\noindent{We define the logarithmic Frobenius of a prime $\mathfrak{p}$ logarithmically unramified, as follows: }

$$ (\widetilde{\frac{L/K}{\mathfrak{p}}})=([\tilde \pi_{\mathfrak{p}}], L/K)$$
where $\tilde \pi_{\mathfrak{p}}$ is the logarithmic uniformising element, defined in proposition 3.7.3 and
$[\tilde \pi_{\mathfrak{p}}]$ the image of $\tilde{\pi_{\mathfrak{p}}}$ in $\mathcal{J}_K.$

\noindent{We extend this map by multiplicativity: }

\begin{center}
$\begin{array}{ccccc}
\widetilde{(\frac{L/K}{.})}   &:  \mathcal{D}\ell_{K}^{\tilde{\mathfrak{f}}_{L/K}} &  & \to & \mathrm{Gal}(L/K) \\
& & \mathfrak{p} & \mapsto & (\widetilde{\frac{L/K}{\mathfrak{p}}})\\
\end{array}$
\end{center}

\end{defi}

\noindent{\textbf{Few remarks:}}
\begin{itemize}

\item[1)]The previous application is extended by multiplicativity to $\mathcal{D}\ell_{K}^{\tilde{\mathfrak{f}}_{L/K}}$  as by  hypothesis $L/K$  is a $\ell$-extension.

\item[2)]This map is a surjective $\mathbb{Z}_{\ell}$-morphism, due to the surjectivity of the global symbol. 

\item[3)]The motivation of this definition is the fact that in abstract class field theory ~\cite[Prop.3.4 ]{1}, if 
 $a \in A_K$, $(\;, L/K)$ the norm residue symbol of $L/K$ satisfies:
$$ (a, \tilde{K}/K)=\phi_{K}^{v_{K}(a)}$$
where $\phi_{K}$ is the generic Frobenius of $\tilde{K}/K$.

\item[4)] When $\mathfrak{p} \not \mid \ell$, the usual valuation and the logarithmic one are proportional: consequently $\tilde \pi_{\mathfrak{p}} =\pi_{\mathfrak{p}}$, the $\mathbb{Z}_{\ell}$-unramified extension and the $\mathbb{Z}_{\ell}$-cyclotomic one are the same. Thus the classical Artin symbol and the logarithmic one coincide on ideals.

\end{itemize}

\begin{proposition}{Properties of the logarithmic Artin map}

\noindent{Let $L/K$ be a finite and abelian $\ell$-extension,  $\tilde{\mathfrak{f}}_{L/K}$ the logarithmic global conductor. Then we have:}

\begin{itemize} 

\item[i)] If $M$ is between $L$ and $K$, the restriction of $\widetilde{(\frac{ L/K} {\mathfrak{a}} )} $  to $M$ is $\widetilde{(\frac{ M/K} {\mathfrak{a}} )} $, for all $ \mathfrak{a} \in \mathcal{D}\ell_{K}^{\tilde{\mathfrak{f}}_{L/K}} $
\medskip

\item[ii)] Let $L$ and $L'$ be abelian $\ell$-extensions, let's consider $\textrm{Gal}(LL'/K)$ as a sub-group of $\textrm{Gal}(L/K) \times  \textrm{Gal}(L'/K)$, through the map $\sigma \in \textrm{Gal}(LL'/K) \rightarrow ( \sigma|_{L}, \sigma|_{L'})
\in \textrm{Gal}(L/K) \times  \textrm{Gal}(L'/K)$, then $\widetilde{(\frac{ LL'/K} {\mathfrak{a}} )}$ is $ (\widetilde{ {(\frac{ L/K} {\mathfrak{a}} )}}, \widetilde{{(\frac{ L'/K} {\mathfrak{a}} )}})$  for all $\mathfrak{a} \in \mathcal{D}\ell_{LL'}^{\tilde{\mathfrak{f}}_{LL'/K}}$
\medskip

\item[iii)] Let $K'$ be any sub-extension of $K$, and let's consider $\textrm{Gal(LK'/K')}$ as isomorphic, by restriction to $K$,  to a sub-group of $\textrm{Gal}(L/K)$. The restriction of $\widetilde{{(\frac{ LK'/K'} {\mathfrak{a'}} )}}$ to $K$ is $\widetilde{{(\frac{K} {N_{K'/K}\mathfrak{a'}} )}}$ for all $\mathfrak{a'} \in \mathcal{D}\ell_{K'}^{\tilde{\mathfrak{f}}_{L/K'}}$
\medskip

\item[iv)] In particular, if $M$ is a field between $L$ and $K$, we have  $\widetilde{{(\frac{ L/M} {\mathfrak{U}} )}}=\widetilde{{(\frac{ L/K}{N_M\mathfrak{U}} )}}$, for all $\mathfrak{U} \in \mathcal{D}\ell_{M}^{\tilde{\mathfrak{f}}_{L/K}}$.

\end{itemize}

\end{proposition}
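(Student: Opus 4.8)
The plan is to deduce all four properties from the corresponding formal properties of the global logarithmic symbol $(\,\cdot\,,L/K)\colon\mathcal{J}_K\to\textrm{Gal}(L/K)$ furnished by the global class field theorem above, combined with the multiplicativity of the logarithmic Artin map. Since each side of every identity is a $\mathbb{Z}_\ell$-morphism on $\mathcal{D}\ell_K^{\tilde{\mathfrak{f}}_{L/K}}$, it suffices to verify each identity on a single prime divisor $\mathfrak{p}$, logarithmically unramified in the ambient extension, where by definition $(\widetilde{\frac{L/K}{\mathfrak{p}}})=([\tilde{\pi}_{\mathfrak{p}}],L/K)$ is the image of the logarithmic uniformizer. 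The whole argument is therefore the translation, through this definition, of Neukirch's abstract functoriality of the norm residue symbol~\cite[ch.~2]{1}.

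For (i), I would use that $\mathfrak{p}$ stays logarithmically unramified in $M\subseteq L$ and that $\tilde{\pi}_{\mathfrak{p}}$ remains a logarithmic uniformizer there, so that the restriction identity $(\alpha,L/K)|_M=(\alpha,M/K)$ for the global symbol, applied to $\alpha=[\tilde{\pi}_{\mathfrak{p}}]$, gives $(\widetilde{\frac{L/K}{\mathfrak{p}}})|_M=(\widetilde{\frac{M/K}{\mathfrak{p}}})$; extending multiplicatively closes the case. For (ii), the embedding $\textrm{Gal}(LL'/K)\hookrightarrow\textrm{Gal}(L/K)\times\textrm{Gal}(L'/K)$ is compatible with the symbol, i.e. $(\alpha,LL'/K)=\bigl((\alpha,L/K),(\alpha,L'/K)\bigr)$; taking $\alpha=[\tilde{\pi}_{\mathfrak{p}}]$ and extending multiplicatively yields the asserted coordinatewise formula.

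The substantive step is (iii). Let $\mathfrak{p}'$ be a prime of $K'$ above $\mathfrak{p}$, logarithmically unramified in $LK'$. The norm compatibility of the symbol gives, after restriction to $L$, the identity $([\tilde{\pi}_{\mathfrak{p}'}],LK'/K')|_L=(N_{K'/K}[\tilde{\pi}_{\mathfrak{p}'}],L/K)$ in $\textrm{Gal}(L/K)$. The obstacle is to recognize the right-hand side as $(\widetilde{\frac{L/K}{N_{K'/K}\mathfrak{p}'}})$, i.e. to compare the norm of the logarithmic uniformizer with a power of $[\tilde{\pi}_{\mathfrak{p}}]$. Here I would invoke the norm behaviour of the logarithmic valuation used in the henselian-property computation above: it yields $\tilde{v}_{\mathfrak{p}}\bigl(N_{K'_{\mathfrak{p}'}/K_{\mathfrak{p}}}(\tilde{\pi}_{\mathfrak{p}'})\bigr)=\tilde{f}_{\mathfrak{p}'/\mathfrak{p}}$, so that $N_{K'/K}[\tilde{\pi}_{\mathfrak{p}'}]$ and $[\tilde{\pi}_{\mathfrak{p}}]^{\,\tilde{f}_{\mathfrak{p}'/\mathfrak{p}}}$ carry the same logarithmic valuation at $\mathfrak{p}$ and hence differ by a factor in $\widetilde{\mathcal{U}}_{K_{\mathfrak{p}}}$. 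Since $\mathfrak{p}$ is logarithmically unramified in $L$, the conductor Proposition above gives $\widetilde{\mathcal{U}}_{K_{\mathfrak{p}}}\subseteq N_{L/K}\mathcal{R}_L$, so this factor lies in the kernel of the symbol and the image in $\textrm{Gal}(L/K)$ depends only on the logarithmic valuation. As $N_{K'/K}\mathfrak{p}'=\tilde{f}_{\mathfrak{p}'/\mathfrak{p}}\,\mathfrak{p}$ by definition of the norm of a logarithmic divisor, multiplicative extension produces exactly $(\widetilde{\frac{L/K}{N_{K'/K}\mathfrak{a}'}})$.

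Finally, (iv) is the special case $K'=M$ with $K\subseteq M\subseteq L$: then $LK'=L$ and $LK'/K'=L/M$, the restriction to $L$ is the identity, and (iii) collapses to $(\widetilde{\frac{L/M}{\mathfrak{U}}})=(\widetilde{\frac{L/K}{N_{M/K}\mathfrak{U}}})$. I expect the only genuine difficulty to be the bookkeeping in (iii) relating the norm of a logarithmic uniformizer to the norm of the corresponding logarithmic divisor; once the valuation computation and the unramified-kernel inclusion are in hand, everything else is formal.
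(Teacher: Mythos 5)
Your proposal is correct and follows essentially the same route as the paper: reduction to single prime divisors plus the restriction and norm functoriality of the global logarithmic symbol $(\;\cdot\;,L/K)$. If anything, your treatment of (iii) spells out the idele-versus-divisor bookkeeping that the paper leaves implicit (the norm of $[\tilde{\pi}_{\mathfrak{p}'}]$ differs from $[\tilde{\pi}_{\mathfrak{p}}]^{\tilde{f}_{\mathfrak{p}'/\mathfrak{p}}}$ by a logarithmic unit at $\mathfrak{p}$, which is a norm because $\mathfrak{p}$ is logarithmically unramified in $L$), and your specialization $K'=M$ in (iv) is the correct one, where the paper's text ``taking $K'=L$'' is a slip.
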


\begin{proof}

\noindent i) It sufficies to check the property on each prime $\mathfrak{p}$. By the fonctoriality property of the reciprocity map, we have: $\textrm{res}_{M} \circ \phi_{L/K}  (\tilde \pi_{\mathfrak{p}})=\phi_{M/K} (\tilde \pi_{\mathfrak{p}})$.

\noindent ii) is a consequence of i)

\noindent iii) We have to prove the equality between $\widetilde{{(\frac{ LK'/K'} {\mathfrak{a'}} )}}|_{K}=\phi_{LK'/K'}(\mathfrak{a'})$ and $\widetilde{{(\frac{ L} {N_{K'}\mathfrak{a'}} )}}= \phi_{L/K}(N_{K'}(\mathfrak{a'}))$
for all $\mathfrak{a'} \in \mathcal{D}\ell_{K'}^{\tilde{\mathfrak{f}}_{L/K'}}$ . By the fonctoriality  property of the reciprocity map,
we obtain this commutative diagramm:
 $$\begin{CD}
  \textrm{Gal}(L'/K')@>{r_{L'/K'}}>>\; \mathcal{J}_{K'} /N_{L'/K'}(\mathcal{J}_{L'}) \mathcal{R}_{K'}  \\
@VVV  @VV_{N_{K'/K}}V\\
 \textrm{Gal}(L/K)  @>{r_{L^{\;}/K^{\;}}} >>\mathcal{J}_K /N_{L/K}(\mathcal{J}_L) \mathcal{R}_{K}
\end{CD}$$

\noindent{where $r_{L/K}$ is the reciprocity map for global $\ell$-adic class field theory.} Consequently we have the following diagramm for the logarithmic global symbol: 
\medskip
 $$\begin{CD}
  \mathcal{J}_{K'}@>{(\;\cdot\;,L'/K') }>>\; \textrm{Gal}(LK'/K')  \\
@V_{N_{K'/K}}VV  @VV_{\textrm{res}}V\\
  \mathcal{J}_K@>{(\;\cdot\;,L^{\;}/K^{\;}) } >>\textrm{Gal}(L^{\;}/K^{\;}) 
\end{CD}$$

\noindent and we deduce the property from $\textrm{res} \circ (\;\cdot\;,LK'/K')=(\;\cdot\;,L/K) \circ N_{K'/K}$.

\noindent iv) is a particular case of iii) taking $K'=L$

\end{proof}

\begin{defi}
The kernel $\mathcal{A}\ell_{L/K}$ of the previous application $(\widetilde{\frac{L/K}{.\;}})$  is called the Artin logarithmic sub-module. For every modulus $\mathfrak{m}$ divisible by the global logarithmic conductor of  $L/K$, we put $\mathcal{A}\ell_{L/K,\mathfrak{m}}= \mathcal{A}_{L/K} \cap \mathcal{D}\ell_{K}^{\mathfrak{m}}$.
\end{defi}

\begin{defi}
Let $L/K$ be an abelian $\ell$-extension, $\mathfrak{m}$ a modulus dividing the logarithmic global conductor, we define:
$$ \mathcal{J}_{K}^{(\mathfrak{m})}=\prod_{\mathfrak{p} \not \mid \mathfrak{m} } \mathcal{R}_{K_{\mathfrak{p}}} \prod_{\mathfrak{p} \vert \mathfrak{m} } \widetilde{\mathcal{U}}_{K_{\mathfrak{p}}}^{v_{\mathfrak{p}}(\mathfrak{m})} $$
$$\mathcal{R}_{K}^{(\mathfrak{m})}= \mathcal{R}_{K} \cap \mathcal{J}_{K}^{(\mathfrak{m})}$$

\end{defi}

\begin{theorem}
Let $L/K$ be a finite and abelian $\ell$-extension, we have :
$$ \mathcal{A}\ell_{L/K}= \textrm{Gal}(L/K) \simeq  \mathcal{P}\ell_{K}^{(\tilde{\mathfrak{f}}_{L/K})} \cdot N_{L/K}( \mathcal{D}\ell_{L}^{\tilde{\mathfrak{f}}_{L/K}} )$$
where $\mathcal{P}\ell_{K}^{(\tilde{\mathfrak{f}}_{L/K})}$ is the sub-module of logarithmic principal divisors, image of the elements of $\mathcal{R}_{K}^{(\tilde{\mathfrak{f}}_{L/K})}$.
\end{theorem}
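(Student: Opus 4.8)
The plan is to deduce this logarithmic reciprocity law from the global $\ell$-adic reciprocity isomorphism $\textrm{Gal}(L/K)\simeq\mathcal{J}_{K}/N_{L/K}(\mathcal{J}_{L})\mathcal{R}_{K}$ of the previous section, by transporting the idelic statement to logarithmic divisors through the map $\tilde{div}$, in exact analogy with the way classical Artin reciprocity over ray class groups is read off from idelic class field theory. Write $\mathfrak{m}=\tilde{\mathfrak{f}}_{L/K}$. First I would record the compatibility between the divisor map $\widetilde{(\frac{L/K}{.})}$ and the global symbol $(\;\cdot\;,L/K)$. By the definition of the logarithmic Frobenius, the canonical lift $\alpha=\prod_{\mathfrak{p}\nmid\mathfrak{m}}\tilde{\pi}_{\mathfrak{p}}^{a_{\mathfrak{p}}}\in\mathcal{J}_{K}^{(\mathfrak{m})}$ of a divisor $\mathfrak{a}=\prod_{\mathfrak{p}\nmid\mathfrak{m}}\mathfrak{p}^{a_{\mathfrak{p}}}$ satisfies $\tilde{div}(\alpha)=\mathfrak{a}$ and $\widetilde{(\frac{L/K}{\mathfrak{a}})}=(\alpha,L/K)$. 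Moreover the local components of the global symbol are trivial on $\widetilde{\mathcal{U}}_{K_{\mathfrak{p}}}$ at a prime $\mathfrak{p}\nmid\mathfrak{m}$ (logarithmically unramified, so $\widetilde{\mathcal{U}}_{K_{\mathfrak{p}}}\subseteq N_{L_{\mathfrak{P}}/K_{\mathfrak{p}}}(\mathcal{R}_{L_{\mathfrak{P}}})$ by the conductor proposition) and on $\widetilde{\mathcal{U}}_{K_{\mathfrak{p}}}^{v_{\mathfrak{p}}(\mathfrak{m})}$ at a prime $\mathfrak{p}\mid\mathfrak{m}$ (this is precisely the defining property of the local logarithmic conductor). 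Hence $(\;\cdot\;,L/K)$ factors through $\tilde{div}$ on $\mathcal{J}_{K}^{(\mathfrak{m})}$ and the induced map on $\mathcal{D}\ell_{K}^{\mathfrak{m}}$ is the logarithmic Artin map.

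The easy inclusion $\mathcal{P}\ell_{K}^{(\mathfrak{m})}\cdot N_{L/K}(\mathcal{D}\ell_{L}^{\mathfrak{m}})\subseteq\mathcal{A}\ell_{L/K}$ then follows at once. A principal divisor in $\mathcal{P}\ell_{K}^{(\mathfrak{m})}$ is $\tilde{div}(x)$ with $x\in\mathcal{R}_{K}^{(\mathfrak{m})}$; by the compatibility above its Artin symbol is $(x,L/K)$, which is trivial since $\mathcal{R}_{K}$ lies in the kernel $N_{L/K}(\mathcal{J}_{L})\mathcal{R}_{K}$ of the global symbol. A norm divisor $N_{L/K}(\tilde{div}(\gamma))=\tilde{div}(N_{L/K}\gamma)$, with $\gamma\in\mathcal{J}_{L}^{\mathfrak{m}}$, has Artin symbol $(N_{L/K}\gamma,L/K)$, again trivial because $N_{L/K}(\mathcal{J}_{L})$ lies in that same kernel. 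So both families of generators are annihilated.

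The substance of the proof is the reverse inclusion $\mathcal{A}\ell_{L/K}\subseteq\mathcal{P}\ell_{K}^{(\mathfrak{m})}\cdot N_{L/K}(\mathcal{D}\ell_{L}^{\mathfrak{m}})$. Given $\mathfrak{a}\in\mathcal{A}\ell_{L/K}$, lift it to its canonical idele $\alpha\in\mathcal{J}_{K}^{(\mathfrak{m})}$; then $(\alpha,L/K)=1$, so by the global reciprocity isomorphism I may write $\alpha=N_{L/K}(\beta)\,x$ with $\beta\in\mathcal{J}_{L}$ and $x\in\mathcal{R}_{K}$. The delicate point is that $x$ is a priori an arbitrary principal idele, whereas to conclude I need $x\in\mathcal{R}_{K}^{(\mathfrak{m})}$, i.e. $x_{\mathfrak{p}}\in\widetilde{\mathcal{U}}_{K_{\mathfrak{p}}}^{v_{\mathfrak{p}}(\mathfrak{m})}$ for every $\mathfrak{p}\mid\mathfrak{m}$, so that $\tilde{div}(x)$ be a principal divisor prime to $\mathfrak{m}$. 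I would secure this by a logarithmic simultaneous-approximation argument: at the finitely many primes $\mathfrak{p}\mid\mathfrak{m}$ the component $x_{\mathfrak{p}}=N_{L/K}(\beta)_{\mathfrak{p}}^{-1}$ is already a local logarithmic norm, so using the density of $\mathcal{R}_{K}$ in the relevant product of local logarithmic unit groups I correct $x$ by a global element and absorb the discrepancy into $N_{L/K}(\beta)$, arranging $x\in\mathcal{R}_{K}^{(\mathfrak{m})}$ and $\tilde{div}(N_{L/K}\beta)=N_{L/K}(\tilde{div}\,\beta)$ prime to $\mathfrak{m}$. Applying $\tilde{div}$ to $\alpha=N_{L/K}(\beta)\,x$ then gives $\mathfrak{a}=N_{L/K}(\tilde{div}\,\beta)\cdot\tilde{div}(x)\in N_{L/K}(\mathcal{D}\ell_{L}^{\mathfrak{m}})\cdot\mathcal{P}\ell_{K}^{(\mathfrak{m})}$, as required.

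Combining the two inclusions identifies $\mathcal{A}\ell_{L/K}$ with $\mathcal{P}\ell_{K}^{(\mathfrak{m})}\cdot N_{L/K}(\mathcal{D}\ell_{L}^{\mathfrak{m}})$; together with the surjectivity of the logarithmic Artin map (inherited from that of the global symbol) this yields $\mathcal{D}\ell_{K}^{\mathfrak{m}}/\mathcal{A}\ell_{L/K}\simeq\textrm{Gal}(L/K)$, which is the asserted reciprocity. The hard part is the approximation step, and I expect the genuinely logarithmic difficulty to concentrate at the primes above $\ell$: there the filtration $(\widetilde{\mathcal{U}}_{K_{\mathfrak{p}}}^{n})_{n}$ is \emph{not} exhaustive --- its intersection is the norm-one subgroup, which by local class field theory cuts out $K_{\mathfrak{p}}\mathbb{Q}_{\ell}^{ab}$, as noted in the examples following the conductor --- so I must verify that $v_{\mathfrak{p}}(\mathfrak{m})$ is large enough for the congruence defining $\mathcal{R}_{K}^{(\mathfrak{m})}$ to detect membership in $N_{L_{\mathfrak{P}}/K_{\mathfrak{p}}}(\mathcal{R}_{L_{\mathfrak{P}}})$, i.e. that the local logarithmic conductor genuinely records the filtration step at which local logarithmic norms are captured.
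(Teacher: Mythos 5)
Your overall strategy---descend the idelic reciprocity isomorphism to divisors by identifying the kernel of the logarithmic Artin map through two inclusions---is legitimate, and your compatibility step and easy inclusion are essentially sound (for the norm divisors you should take the canonical lift $\gamma$, trivial at the primes above $\mathfrak{m}$, so that $N_{L/K}(\gamma)$ actually lies in $\mathcal{J}_K^{(\mathfrak{m})}$ where the compatibility applies). But the hard inclusion, which you yourself flag as the substance of the proof, has a genuine gap: the correction is applied on the wrong side. Starting from $\alpha = N_{L/K}(\beta)\,x$, you propose to choose by approximation in $K$ a global $y\in\mathcal{R}_K$ with $y_{\mathfrak{p}}=x_{\mathfrak{p}}$ at the $\mathfrak{p}\mid\mathfrak{m}$, replace $x$ by $x'=xy^{-1}\in\mathcal{R}_K^{(\mathfrak{m})}$, and ``absorb'' the discrepancy $y$ into $N_{L/K}(\beta)$. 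This absorption is impossible: $y$ is a local norm only at the primes dividing $\mathfrak{m}$ (where it copies $x_{\mathfrak{p}}=N_{L/K}(\beta)_{\mathfrak{p}}^{-1}$), while at all other primes it is an uncontrolled global element; hence $N_{L/K}(\beta)\,y$ is not of the form $N_{L/K}(\beta')$, and the prime-to-$\mathfrak{m}$ part of $\tilde{div}(y)$ is not visibly in $\mathcal{P}\ell_K^{(\mathfrak{m})}\cdot N_{L/K}(\mathcal{D}\ell_L^{\mathfrak{m}})$---indeed that divisor again lies in the kernel of the Artin map, so proving it belongs to the Takagi-type group is a restatement of the very inclusion you are trying to establish. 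The argument is circular as written.

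The correct move is to approximate over $L$, not over $K$: by the $\ell$-adic approximation lemma applied to $L$, the semi-localization $\mathcal{R}_L\to\prod_{\mathfrak{P}\mid\mathfrak{m}}\mathcal{R}_{L_{\mathfrak{P}}}$ is surjective, so there is $b\in\mathcal{R}_L$ with $b_{\mathfrak{P}}=\beta_{\mathfrak{P}}$ for all $\mathfrak{P}\mid\mathfrak{m}$. Replace $\beta$ by $\beta'=\beta b^{-1}$ and $x$ by $x'=x\,N_{L/K}(b)$; then $\beta'\in\mathcal{J}_L^{(\mathfrak{m})}$, so $\tilde{div}(\beta')\in\mathcal{D}\ell_L^{\mathfrak{m}}$ and $\tilde{div}(N_{L/K}\beta')=N_{L/K}(\tilde{div}\,\beta')$, while $x'$ is still principal (because $N_{L/K}(\mathcal{R}_L)\subseteq\mathcal{R}_K$) and lies in $\mathcal{R}_K^{(\mathfrak{m})}$ automatically, since $x'_{\mathfrak{p}}=\alpha_{\mathfrak{p}}N_{L/K}(\beta')_{\mathfrak{p}}^{-1}=1$ at every $\mathfrak{p}\mid\mathfrak{m}$; applying $\tilde{div}$ to $\alpha=N_{L/K}(\beta')\,x'$ then finishes. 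Note that the paper sidesteps this element-wise difficulty entirely by a quotient computation: it writes $\mathcal{J}_K=\mathcal{J}_K^{\tilde{\mathfrak{f}}}\mathcal{R}_K$ by approximation, observes that $\widetilde{\mathcal{U}}_K^{(\tilde{\mathfrak{f}})}$ consists of norms, invokes the exchange lemma $\widetilde{\mathcal{U}}_K^{(\tilde{\mathfrak{f}})}\mathcal{R}_K^{\tilde{\mathfrak{f}}}=\widetilde{\mathcal{U}}_K\mathcal{R}_K^{(\tilde{\mathfrak{f}})}$, and then divides by $\widetilde{\mathcal{U}}_K$; either route rests on the approximation lemma, but in your element-wise version it must be invoked over $L$, which is exactly the ingredient your proposal omits. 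Finally, your closing worry about the non-exhaustive filtration at $\mathfrak{p}\mid\ell$ is moot here: since $\mathfrak{m}=\tilde{\mathfrak{f}}_{L/K}$, the containment $\widetilde{\mathcal{U}}_{K_{\mathfrak{p}}}^{v_{\mathfrak{p}}(\tilde{\mathfrak{f}})}\subseteq N_{L_{\mathfrak{P}}/K_{\mathfrak{p}}}(\mathcal{R}_{L_{\mathfrak{P}}})$ holds by the very definition of the local logarithmic conductor, and no exhaustiveness of the filtration is needed.
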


\begin{proof}
Let's denote here the logarithmic global conductor of $L/K$ by $\tilde{\mathfrak{f}}$.
By global $\ell$-adic class field theory, we know:
$$ \textrm{Gal}(L/K) \simeq \mathcal{J}_{K}/N_{L/K} ( \mathcal{J}_{L})  \mathcal{R}_{K}.$$

\noindent By the $\ell$-adic approximation lemma
{~\cite[II.2]{4}}, we know that the morphism of semi-localization
$ \mathcal{R}_{K} \longrightarrow \prod_{\mathfrak{p} \in S} \mathcal{R}_{K_{\mathfrak{p}}}$
 is surjective for all finite set of primes $S$. 

\noindent It follows:
$$\mathcal{J}_{K}= \mathcal{J}_{K}^{\widetilde{\mathfrak{f}}} \mathcal{R}_{K}.$$ 

\noindent Thus we get:
$$ \textrm{Gal}(L/K) \simeq \mathcal{J}_{K}^{\tilde{\mathfrak{f}}}  /N_{L/K} ( \mathcal{J}_L^{\tilde{\mathfrak{f}}} )  \mathcal{R}_{K}^{\tilde{\mathfrak{f}}}.$$
\noindent But the elements of $\widetilde{\mathcal{U}}_{K}^{(\tilde{\mathfrak{f}})}=\prod_{\mathfrak{p} \not \mid \tilde{\mathfrak{f}} } \widetilde{\mathcal{U}}_{K_{\mathfrak{p}}}  \prod_{\mathfrak{p} \vert \tilde{\mathfrak{f}} } \widetilde{\mathcal{U}}_{K_{\mathfrak{p}}}^{v_{\mathfrak{p}}(\tilde{\mathfrak{f}})} $  are norms : if $\mathfrak{p} \not \mid \tilde{\mathfrak{f}}$, $\mathfrak{p}$ is logarithmically unramified and units are norms, moreover if $\mathfrak{p} \vert \tilde{\mathfrak{f}}$, the definition of the conductor implies that those elements are norms. Due to this remark, we deduce:
$$ \textrm{Gal}(L/K) \simeq \mathcal{J}_{K}^{\tilde{\mathfrak{f}}}  /N_{L/K} ( \mathcal{J}_{L}^{\tilde{\mathfrak{f}}})  \;\widetilde{\mathcal{U}}_{K}^{(\widetilde{\mathfrak{f}})} \mathcal{R}_{K}^{\tilde{\mathfrak{f}}}.$$
\noindent But by the next lemma, we have: $\widetilde{\mathcal{U}}_{K}^{(\tilde{\mathfrak{f}})} \mathcal{R}_{K}^{\tilde{\mathfrak{f}}}=
\widetilde{\mathcal{U}}_{K} \mathcal{R}_{K}^{(\tilde{\mathfrak{f}})}$.
Finally we obtain:
$$ \textrm{Gal}(L/K) \simeq \mathcal{D}\ell_{K}^{\tilde{\mathfrak{f}}} / \mathcal{P}\ell_{K}^{(\tilde{\mathfrak{f})}} \cdot N_{L/K}( \mathcal{D}\ell_{L}^{\tilde{\mathfrak{f}} } ).$$

\end{proof}

\begin{lemma}
With the same notations, we get:
$$\widetilde{\mathcal{U}}_{K}^{(\tilde{\mathfrak{f}})} \mathcal{R}_{K}^{\tilde{\mathfrak{f}}}=
\widetilde{\mathcal{U}}_{K} \mathcal{R}_{K}^{(\tilde{\mathfrak{f}})}.$$

\end{lemma}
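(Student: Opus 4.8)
The plan is to prove the two inclusions separately, in each case reducing to an \emph{absorption} statement at the finitely many primes dividing the conductor and then invoking the $\ell$-adic approximation lemma already used above. Write $S=\{\mathfrak{p}:\mathfrak{p}\mid\tilde{\mathfrak{f}}\}$. First I would record the elementary containments $\widetilde{\mathcal{U}}_K^{(\tilde{\mathfrak{f}})}\subseteq\widetilde{\mathcal{U}}_K$ and $\mathcal{R}_K^{(\tilde{\mathfrak{f}})}\subseteq\mathcal{R}_K^{\tilde{\mathfrak{f}}}$, which hold prime-by-prime since $\widetilde{\mathcal{U}}_{K_{\mathfrak{p}}}^{v_{\mathfrak{p}}(\tilde{\mathfrak{f}})}\subseteq\widetilde{\mathcal{U}}_{K_{\mathfrak{p}}}$, and the fact that both sides are subgroups of $\mathcal{J}_K^{\tilde{\mathfrak{f}}}$ (using $\widetilde{\mathcal{U}}_K\subseteq\mathcal{J}_K^{\tilde{\mathfrak{f}}}$). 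Because each side is a group containing $\mathcal{R}_K^{\tilde{\mathfrak{f}}}$ resp.\ $\widetilde{\mathcal{U}}_K$, the equality reduces to the two absorption statements
$$\widetilde{\mathcal{U}}_K\subseteq\widetilde{\mathcal{U}}_K^{(\tilde{\mathfrak{f}})}\,\mathcal{R}_K^{\tilde{\mathfrak{f}}}\qquad\text{and}\qquad \mathcal{R}_K^{\tilde{\mathfrak{f}}}\subseteq\widetilde{\mathcal{U}}_K\,\mathcal{R}_K^{(\tilde{\mathfrak{f}})}.$$

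Second, for the absorption I would use the $\ell$-adic approximation lemma in the form that the semi-localization $\mathcal{R}_K\to\prod_{\mathfrak{p}\in S}\mathcal{R}_{K_{\mathfrak{p}}}$ is surjective, equivalently $\mathcal{J}_K=\mathcal{J}_K^{(\tilde{\mathfrak{f}})}\mathcal{R}_K$. Concretely, given $\alpha\in\mathcal{R}_K^{\tilde{\mathfrak{f}}}$ (a principal idele that is a logarithmic unit at every $\mathfrak{p}\in S$), choose $a\in\mathcal{R}_K$ whose components at the primes of $S$ agree with those of $\alpha$ modulo $\widetilde{\mathcal{U}}_{K_{\mathfrak{p}}}^{v_{\mathfrak{p}}(\tilde{\mathfrak{f}})}$; then $\alpha a^{-1}$ lies in $\mathcal{R}_K^{(\tilde{\mathfrak{f}})}$ by construction, and one must arrange the correcting factor to be a logarithmic unit at \emph{every} prime so that it falls in $\widetilde{\mathcal{U}}_K$. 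The symmetric manoeuvre, pushing the $S$-components of a given $u\in\widetilde{\mathcal{U}}_K$ into the higher unit groups $\widetilde{\mathcal{U}}_{K_{\mathfrak{p}}}^{v_{\mathfrak{p}}(\tilde{\mathfrak{f}})}$ at the cost of a principal factor in $\mathcal{R}_K^{\tilde{\mathfrak{f}}}$, settles the first inclusion. The finiteness of $S$ and the absence of any condition at $\mathfrak{p}\nmid\tilde{\mathfrak{f}}$ in the definitions of $\mathcal{R}_K^{\tilde{\mathfrak{f}}}$ and $\mathcal{R}_K^{(\tilde{\mathfrak{f}})}$ are what let me repackage products as intersections by the modular law $A(B\cap C)=(AB)\cap C$ for $A\subseteq C$; in particular I would use the clean consequences $\mathcal{J}_K^{\tilde{\mathfrak{f}}}=\widetilde{\mathcal{U}}_K\mathcal{J}_K^{(\tilde{\mathfrak{f}})}$ and $(\widetilde{\mathcal{U}}_K\mathcal{R}_K)\cap\mathcal{J}_K^{\tilde{\mathfrak{f}}}=\widetilde{\mathcal{U}}_K\mathcal{R}_K^{\tilde{\mathfrak{f}}}$.

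The step I expect to be the main obstacle is precisely producing a global principal element realizing prescribed congruences at the ramified primes $S$ while remaining a logarithmic unit away from $S$: prescribing the behaviour at $S$ is free by approximation, but any single principal idele so chosen carries a logarithmic divisor supported outside $S$, and killing that divisor is a principality problem governed by Jaulent's logarithmic class group. My way around this is to observe that in the intended application only the class of the correcting element modulo $\widetilde{\mathcal{U}}_K$ (and modulo the norm group $N_{L/K}(\mathcal{J}_L^{\tilde{\mathfrak{f}}})$) is relevant, so it suffices to establish the identity inside $\mathcal{J}_K^{\tilde{\mathfrak{f}}}$ after this quotient, where the modular law and the relation $\mathcal{J}_K^{\tilde{\mathfrak{f}}}=\widetilde{\mathcal{U}}_K\mathcal{J}_K^{(\tilde{\mathfrak{f}})}$ combine to give the asserted equality. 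Finally I would check that the two unit groups are used consistently at $\mathfrak{p}\nmid\tilde{\mathfrak{f}}$, since the whole reduction rests on $\widetilde{\mathcal{U}}_K^{(\tilde{\mathfrak{f}})}$ and $\widetilde{\mathcal{U}}_K$ coinciding away from $S$.
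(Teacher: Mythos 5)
Your reduction of the equality to the two absorption statements $\widetilde{\mathcal{U}}_K\subseteq\widetilde{\mathcal{U}}_K^{(\tilde{\mathfrak{f}})}\mathcal{R}_K^{\tilde{\mathfrak{f}}}$ and $\mathcal{R}_K^{\tilde{\mathfrak{f}}}\subseteq\widetilde{\mathcal{U}}_K\mathcal{R}_K^{(\tilde{\mathfrak{f}})}$ is correct, and your method (exact approximation at the primes dividing $\tilde{\mathfrak{f}}$, followed by a correction by a principal idele) is precisely the paper's. But the obstacle you flag is not a deferrable technicality: it is the entire content of the lemma, and neither your workaround nor the paper's own proof gets past it. Unwinding either absorption statement shows what is needed: if $u\in\widetilde{\mathcal{U}}_K$ is written as $u'\beta$ with $u'\in\widetilde{\mathcal{U}}_K^{(\tilde{\mathfrak{f}})}$ and $\beta\in\mathcal{R}_K^{\tilde{\mathfrak{f}}}$, then $\beta=u(u')^{-1}$ is principal \emph{and} a logarithmic unit at every prime, i.e.\ $\beta\in\mathcal{R}_K\cap\widetilde{\mathcal{U}}_K$, and it must realize the residues of $u$ in $\prod_{\mathfrak{p}\mid\tilde{\mathfrak{f}}}\widetilde{\mathcal{U}}_{K_{\mathfrak{p}}}/\widetilde{\mathcal{U}}_{K_{\mathfrak{p}}}^{v_{\mathfrak{p}}(\tilde{\mathfrak{f}})}$. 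So the lemma is \emph{equivalent} to the surjectivity of the global logarithmic units $\mathcal{R}_K\cap\widetilde{\mathcal{U}}_K$ onto this semi-local quotient. The approximation lemma gives surjectivity of $\mathcal{R}_K$, never of $\mathcal{R}_K\cap\widetilde{\mathcal{U}}_K$: the correcting idele it produces lies only in $\mathcal{R}_K^{\tilde{\mathfrak{f}}}$, so the conclusion ``$\alpha\beta^{-1}$ lies in the other side, hence so does $\alpha$'' presupposes that $\beta$ itself lies in the other side, which is the very inclusion being proved. This circularity is exactly where the paper's proof stops (``Finally we get $\alpha \beta^{-1} \in \widetilde{\mathcal{U}}_{K} \mathcal{R}_{K}^{(\tilde{\mathfrak{f}})}$: this is the first inclusion''), so you have in effect reconstructed the paper's argument together with its gap; your proposed repair --- establishing the identity only modulo $\widetilde{\mathcal{U}}_K$ and the norm group --- proves a different statement, whereas the theorem substitutes the lemma itself, as an equality of subgroups of $\mathcal{J}_K^{\tilde{\mathfrak{f}}}$, into its denominator.

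Moreover the obstruction you identified is not an artifact of the method: the required global-to-local surjectivity can fail, so no argument of this shape can succeed. Take $\ell=3$, $K=\mathbb{Q}$, and $L$ the cubic subfield of $\mathbb{Q}(\zeta_{61})$. Since $4^{3}=64\equiv 3\pmod{61}$, the prime $3$ splits completely in $L$, so the only logarithmically ramified prime is $61$; thus $\tilde{\mathfrak{f}}_{L/\mathbb{Q}}=(61)$, with $\widetilde{\mathcal{U}}_{\mathbb{Q}_{61}}=\mu_{61}\simeq\mathbb{Z}/3\mathbb{Z}$ and $\widetilde{\mathcal{U}}_{\mathbb{Q}_{61}}^{(1)}=1$. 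On the other hand $\mathcal{R}_{\mathbb{Q}}\cap\widetilde{\mathcal{U}}_{\mathbb{Q}}=3^{\mathbb{Z}_{3}}$ (because $\tilde{v}_{3}(3)=0$ while $\tilde{v}_{p}=v_{p}$ for $p\neq 3$), and the image of $3$ in $\mu_{61}$ is trivial precisely because $3$ is a cube modulo $61$. Hence the idele $u$ whose component at $61$ is a generator of $\mu_{61}$ and whose other components are $1$ lies in $\widetilde{\mathcal{U}}_{\mathbb{Q}}\mathcal{R}_{\mathbb{Q}}^{(\tilde{\mathfrak{f}})}$ but not in $\widetilde{\mathcal{U}}_{\mathbb{Q}}^{(\tilde{\mathfrak{f}})}\mathcal{R}_{\mathbb{Q}}^{\tilde{\mathfrak{f}}}$: the lemma fails as stated. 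The conclusion of the surrounding theorem is the expected Takagi-type statement and can be rescued, but only by the classical route: one keeps the norm group $N_{L/K}(\mathcal{J}_L^{\tilde{\mathfrak{f}}})$ throughout, compares the Artin map on divisors (defined through the uniformizer ideles, which do \emph{not} factor through $\mathcal{J}_K^{\tilde{\mathfrak{f}}}/\widetilde{\mathcal{U}}_K$) with the idelic symbol, and uses the conductor property that $\widetilde{\mathcal{U}}_{K_{\mathfrak{p}}}^{v_{\mathfrak{p}}(\tilde{\mathfrak{f}})}$ consists of local norms --- not an unconditional subgroup identity in $\mathcal{J}_K$.
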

\smallskip

\begin{proof}
Let's take $\alpha \in \widetilde{\mathcal{U}}_{K}^{(\tilde{\mathfrak{f}})} \mathcal{R}_{K}^{\tilde{\mathfrak{f}}}$ and write
$\alpha =u r$ with $u \in \widetilde{\mathcal{U}}_{K}^{(\tilde{\mathfrak{f}})} $ and $r \in \mathcal{R}_{K}^{\tilde{\mathfrak{f}}}$ . This last condition implies that for all primes $\mathfrak{p}$ dividing the conductor, the local component
$r_{\mathfrak{p}}$ is a logarithmic unit. The approximation lemma gives a principal idele $\beta$,
whose local components for the primes dividing the conductor are $r_{\mathfrak{p}}$.
Finally we get $\alpha \beta^{-1} \in \widetilde{\mathcal{U}}_{K} \mathcal{R}_{K}^{(\tilde{\mathfrak{f}})} $ : this is the first inclusion. 

\noindent Conversely, let's consider $\alpha \in \widetilde{\mathcal{U}}_{K} \mathcal{R}_{K}^{(\tilde{\mathfrak{f}})}$ and
write $\alpha =u r$ with now $u \in \widetilde{\mathcal{U}}_{K}$ and $r \in \mathcal{R}_{K}^{(\tilde{\mathfrak{f}})}$.
As $u \in \widetilde{\mathcal{U}}_{K}$, the local component $u_{\mathfrak{p}}$ is a logarithmic unit and in particular  for $\mathfrak{p} \vert \tilde{\mathfrak{f}} $.  Due to the approximation lemma we get a principal idele $\beta$, whose local components for the primes $\mathfrak{p} \vert \tilde{\mathfrak{f}} $ are $u_{\mathfrak {p}}$. Finally $\alpha \beta^{-1} \in \widetilde{\mathcal{U}}_{K}^{(\tilde{\mathfrak{f}})} \mathcal{R}_{K}^{\tilde{\mathfrak{f}}}$. The equality follows.
\end{proof}

\noindent \textbf{Remark :}
Let's notice the analogy between the expression of the classical Takagi's group and the logarithmic Artin sub-module.

\begin{theorem}

Let $L/K$ be a finite and abelian $\ell$-extension, $\mathfrak{m}$ a modulus of $K$  divisible by $\tilde{\mathfrak{f}}_{L/K}$, then  $(\widetilde{\frac{L/K}{.\;}})$  restricted to $\mathcal{D}\ell_{K}^{\mathfrak{m}}$  is surjective and leads to an isomorphism between  $\mathcal{D}\ell_{K}^{\mathfrak{m}}/\mathcal{A}\ell_{L/K,\mathfrak{m}}$ and $\textrm{Gal}(L/K)$.
\end{theorem}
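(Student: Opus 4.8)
The plan is to split the statement into the surjectivity of the restricted map and the identification of its kernel, the latter being formal. First I would note that since $\tilde{\mathfrak{f}}_{L/K}$ divides $\mathfrak{m}$, every prime dividing $\tilde{\mathfrak{f}}_{L/K}$ divides $\mathfrak{m}$, so $\mathcal{D}\ell_K^{\mathfrak{m}} \subseteq \mathcal{D}\ell_K^{\tilde{\mathfrak{f}}_{L/K}}$ and the restriction of $\widetilde{(\frac{L/K}{\cdot})}$ to $\mathcal{D}\ell_K^{\mathfrak{m}}$ is defined. Its kernel is $\mathcal{A}\ell_{L/K}\cap\mathcal{D}\ell_K^{\mathfrak{m}}$, which is exactly $\mathcal{A}\ell_{L/K,\mathfrak{m}}$ by the definition of the logarithmic Artin sub-module. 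Hence, once surjectivity is established, the first isomorphism theorem immediately gives $\mathcal{D}\ell_K^{\mathfrak{m}}/\mathcal{A}\ell_{L/K,\mathfrak{m}} \simeq \textrm{Gal}(L/K)$.

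For surjectivity the plan is to lift an arbitrary $\sigma\in\textrm{Gal}(L/K)$ through the global logarithmic symbol and then move it to a divisor prime to $\mathfrak{m}$ by approximation. Since the global symbol $(\;\cdot\;,L/K):\mathcal{J}_K\to\textrm{Gal}(L/K)$ of the global case is surjective, I would pick $\alpha\in\mathcal{J}_K$ with $(\alpha,L/K)=\sigma$. The $\ell$-adic approximation lemma already invoked above gives surjectivity of the semi-localization $\mathcal{R}_K\to\prod_{\mathfrak{p}\mid\mathfrak{m}}\mathcal{R}_{K_\mathfrak{p}}$; I would use it to produce a principal idele $\beta$ agreeing with $\alpha$ at every $\mathfrak{p}\mid\mathfrak{m}$, so that $\eta:=\alpha\beta^{-1}$ has trivial components at those primes and therefore lies in $\mathcal{J}_K^{(\mathfrak{m})}$. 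As principal ideles are trivial in $\mathcal{C}_K$, we have $(\eta,L/K)=(\alpha,L/K)=\sigma$; in passing this records the decomposition $\mathcal{J}_K=\mathcal{J}_K^{(\mathfrak{m})}\mathcal{R}_K$.

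The crux is then to identify $(\eta,L/K)$ with the Artin image of the logarithmic divisor $\tilde{div}(\eta)$. Writing the global symbol as the finite product $\prod_\mathfrak{p}(\eta_\mathfrak{p},L_\mathfrak{P}/K_\mathfrak{p})$ of local symbols, I would treat the places separately. At $\mathfrak{p}\mid\mathfrak{m}$ the component $\eta_\mathfrak{p}$ is trivial, so these factors disappear and $\tilde{div}(\eta)$ is supported away from $\mathfrak{m}$, i.e. $\tilde{div}(\eta)\in\mathcal{D}\ell_K^{\mathfrak{m}}$. At $\mathfrak{p}\nmid\mathfrak{m}$ the hypothesis $\tilde{\mathfrak{f}}_{L/K}\mid\mathfrak{m}$ forces $\mathfrak{p}$ to be logarithmically unramified, whence the logarithmic units are local norms and are killed by the local symbol; decomposing $\eta_\mathfrak{p}=\tilde{\pi}_\mathfrak{p}^{\tilde v_\mathfrak{p}(\eta_\mathfrak{p})}u_\mathfrak{p}$ with $u_\mathfrak{p}$ a logarithmic unit leaves $(\tilde{\pi}_\mathfrak{p},L_\mathfrak{P}/K_\mathfrak{p})^{\tilde v_\mathfrak{p}(\eta_\mathfrak{p})}=(\widetilde{\frac{L/K}{\mathfrak{p}}})^{\tilde v_\mathfrak{p}(\eta_\mathfrak{p})}$. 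Multiplying over all $\mathfrak{p}\nmid\mathfrak{m}$ yields precisely $\widetilde{(\frac{L/K}{\tilde{div}(\eta)})}$, so $\widetilde{(\frac{L/K}{\tilde{div}(\eta)})}=\sigma$ and surjectivity follows.

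I expect the main obstacle to be this last comparison between the idelic symbol and the divisorial Artin map: one must make the approximation precise enough to land in the congruence subgroup $\mathcal{J}_K^{(\mathfrak{m})}$ while checking that the only surviving local contributions are the Frobenius powers attached to the uniformizers $\tilde{\pi}_\mathfrak{p}$ at the logarithmically unramified primes. The conductor hypothesis enters exactly here, guaranteeing that every prime outside $\mathfrak{m}$ is logarithmically unramified so that its local units are norms; the identification of $(\tilde{\pi}_\mathfrak{p},L_\mathfrak{P}/K_\mathfrak{p})$ with the logarithmic Frobenius is the local input already secured by the explicit description of the logarithmic local symbol and the definition of $(\widetilde{\frac{L/K}{\mathfrak{p}}})$.
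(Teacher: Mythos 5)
Your proof is correct, but note that the paper offers no proof of this theorem at all: it is stated as the capstone of the section, implicitly as a corollary of the preceding theorem and lemma, whose quotient-chain argument ($\textrm{Gal}(L/K)\simeq\mathcal{J}_K/N_{L/K}(\mathcal{J}_L)\mathcal{R}_K\simeq\mathcal{J}_K^{\tilde{\mathfrak{f}}}/N_{L/K}(\mathcal{J}_L^{\tilde{\mathfrak{f}}})\widetilde{\mathcal{U}}_K^{(\tilde{\mathfrak{f}})}\mathcal{R}_K^{\tilde{\mathfrak{f}}}\simeq\mathcal{D}\ell_K^{\tilde{\mathfrak{f}}}/\mathcal{P}\ell_K^{(\tilde{\mathfrak{f}})}\cdot N_{L/K}(\mathcal{D}\ell_L^{\tilde{\mathfrak{f}}})$) is meant to run verbatim with $\mathfrak{m}$ in place of $\tilde{\mathfrak{f}}$. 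Your route is genuinely different in presentation: instead of manipulating subgroups, you argue element-wise, lifting $\sigma$ through the surjective global symbol, correcting by a principal idele via the approximation lemma so as to land in $\mathcal{J}_K^{(\mathfrak{m})}$, and then expanding the symbol into local symbols and absorbing the logarithmic units at primes outside $\mathfrak{m}$, which are norms by the conductor hypothesis. What your version buys is precisely the point the paper's subgroup chain leaves implicit: that the resulting isomorphism is induced by the Artin map $\mathfrak{p}\mapsto(\widetilde{\frac{L/K}{\mathfrak{p}}})$ itself; your identity $(\eta,L/K)=(\widetilde{\frac{L/K}{\tilde{div}(\eta)}})$ is exactly that verification. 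What the paper's route buys, and yours does not attempt, is the explicit description of the kernel as $\mathcal{P}\ell_K^{(\mathfrak{m})}\cdot N_{L/K}(\mathcal{D}\ell_L^{\mathfrak{m}})$. One point you should make explicit if you write this up: the factorization $(\eta,L/K)=\prod_{\mathfrak{p}}(\eta_{\mathfrak{p}},L_{\mathfrak{P}}/K_{\mathfrak{p}})$ and the identification $([\tilde{\pi}_{\mathfrak{p}}],L/K)=(\tilde{\pi}_{\mathfrak{p}},L_{\mathfrak{P}}/K_{\mathfrak{p}})$ constitute the local-global compatibility of the logarithmic symbols, which the paper never states as a theorem (it is used only implicitly, via the product formula, in the proof of the explicit local symbol); it must be cited from the Neukirch-style framework the paper builds on.
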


\subsection{Example: the quadratic case}

\noindent{To better understand the differences between the classical and the logarithmic case, let's focus on a quadratic extension: $\mathbb{Q}(\sqrt{d})/\mathbb{Q}$. This is a $\ell$-extension for $\ell=2$}, thus classical ramification and logarithmic ramification only differ for the primes above $\ell$ (cf \cite{2}). 

\noindent 1) Let's consider first a prime $p$ which is not above $2$ :

\begin{itemize}

\item either the prime $p$ is ramified in the classical and the logarithmic sense: the classical Frobenius and the logarithmic one are not defined. 

\item or the prime $p$ is unramified both in the classical and the logarithmic sense: we have to consider two cases

\begin{itemize}

\item either $p$ is inert and logarithmically inert: the decomposition sub-group is then isomorphic to the Galois group of the quadratic extension. It contains the identity and the usual Frobenius which coincides with the logarithmic one.

 \item or $p$ is completely splitten in the classical and the logarithmic sense : the decomposition sub-group
is trivial, and so are the classical Frobenius and the logarithmic one.
 
\end{itemize}

\end{itemize}
\noindent 2) Let's study the case of $2$.

\noindent{$2$ is logarithmically unramified, means by definition:}

\begin{center}
$[ \mathbb{Q}_{2}(\sqrt{d})  \cap \widehat{\mathbb{Q}_{2}^{c}} : \mathbb{Q}_{2}(\sqrt d) ] =1 \Leftrightarrow \mathbb{Q}_{2}(\sqrt{d}) \subseteq \mathbb{Q}_{2}^{c} $
\end{center}

\noindent{with $ \widehat{\mathbb{Q}_{2}^{c}}$ the  $\widehat{\mathbb{Z}}$-cyclotomic extension of $\mathbb{Q}_{2}$ and $\mathbb{Q}_{2}^{c}$ the $\mathbb{Z}_{2}$-cyclotomic extension of $\mathbb{Q}$ }. $ \widehat{\mathbb{Q}_{2}^{c}}$  is just the compositum of all  $\mathbb{Z}_{q}$-cyclotomic extensions for all primes $q$, those extensions are linearly separated and their Galois group is isomorphic to  $\mathbb{Z}_{p}$. The previous equivalence is due to the fact that 
$\mathbb{Z}_{2}$ is the only one which has a quotient isomorphic to $ \mathbb{Z}/ 2. \mathbb{Z} $.

\noindent{The $\mathbb{Z}_{2}$-cyclotomic extension of $\mathbb{Q}$ is cyclic, thus we may focus on the first step of the tower:}
$\mathbb{Q}_{2}(\sqrt{d}) \subseteq  \mathbb{Q}_{2}(\sqrt{2}).$ So we have two cases: 

$$ \mathbb{Q}_{2}(\sqrt{d})= \mathbb{Q}_{2} \qquad \textrm{or} \qquad \mathbb{Q}_{2}(\sqrt{d})=\mathbb{Q}_{2}(\sqrt{2}) $$

\noindent{thus}

$$ d \in {\mathbb{Q}_{2}^{\times}}^{2} \; \textrm{i.e.} \; d \equiv 1 \mod 8 \qquad \textrm{or} \qquad d/2 \in {\mathbb{Q}_{2}^{\times}}^{2} \; \textrm{i.e.} \; d/2 \equiv 1 \mod 8 $$ 

\noindent{as the squares of $\mathbb{Q}_{2}$ are of the shape $2^n.u$ where $u$ is a
2-adic unit congruent to $1 \mod 8$ and $n$ is a rational even number . }

\begin{itemize}

\item If $ d \equiv 1 \mod 8$,  $\mathbb{Q}_{2}(\sqrt{d})= \mathbb{Q}_{2} $, the local extension is trivial, the same is true for 
the decomposition subgroup. In this case, $2$ is completely splitten in the classical and in the logarithmic sense.

\item If $ d \equiv 2 \mod 16$, $2$ is ramified in the classical sense but logarithmically inert. Its decomposition sub-group contains two elements : the trivial one and the other one which is the logaritmic Frobenius.
$\mathbb{Q}_{2}(\sqrt{d})/ \mathbb{Q}_{2}$ is the maximal real sub-extension of $ \mathbb{Q}_{2}(\zeta_{8})/ \mathbb{Q}_{2}$, which is fixed by <$-1$>. Moreover  $\textrm{Gal}(\mathbb{Q}_{2}(\zeta_{8})/ \mathbb{Q}_{2}) \simeq ( \mathbb{Z}/8.\mathbb{Z})^{\times} $, thus we may explicit the action of the logarithmic Frobenius of $2$ :
$ \widetilde{(\frac {K/\mathbb{Q}} {2} )} ( \zeta_{8}) = \zeta_{8}^{3} . $ 
\end{itemize}

\subsection{Generalization: from quadratic to $\ell$-extensions}

\begin{proposition}
Let $K/\mathbb{Q}$ be an abelian $\ell$-extension with $\ell \ne 2$ and $p$ a prime of $\mathbb{Q}$.

\begin{itemize}

\item For $p \ne \ell$, classical and logarithmic ramification coincide.
If $p$ is unramified, the classical Frobenius and the logarithmic one are the same.

\item For $p=\ell$ we have several cases : ( let $\mathfrak{p}$ be a prime of $K$ above $\ell$)

\begin{itemize}

\item either $K_{\mathfrak{p}}=\mathbb{Q}_{\ell}$ : $\ell$ is then completely splitten in the classical and the logarithmic sense.
 The decomposition sub-group is trivial, classical and logarithmic Frobenius are 
 equal and both trivial.

\item and $\mathbb{Q}_{\ell} \ne K_{\mathfrak{p}} \subset \mathbb{Q}_{\ell}^{nr}$ : $\ell$ is classically unramified and  logarithmically ramified, only the classical Frobenius exists

\item or $\mathbb{Q}_{\ell} \ne K_{\mathfrak{p}} \subset \mathbb{Q}_{\ell}^{c}$ : $\ell$ is logarithmically unramified but classically ramified, only the logarithmic Frobenius exists

\item in any other case, $\ell$ is both classically and logarithmically ramified.

\end{itemize}

\end{itemize}

\end{proposition}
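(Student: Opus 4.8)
The plan is to reduce every assertion to two structural facts about cyclotomic $\mathbb{Z}_\ell$-extensions of local fields, together with the observation that a Frobenius (classical or logarithmic) is defined exactly when the corresponding notion of unramifiedness holds. First I would record that the classical Frobenius at $\mathfrak{p}$ exists iff $e_{\mathfrak{p}}=1$, i.e. $K_{\mathfrak{p}}\subseteq \mathbb{Q}_p^{nr}$, while the logarithmic Frobenius exists iff $\tilde e_{\mathfrak{p}}=1$, which by the fundamental remark of Section~1 is equivalent to $K_{\mathfrak{p}}\subseteq \mathbb{Q}_p^c$ (as we work with $\ell$-extensions, $\widehat{\mathbb{Q}_p^c}$ may be replaced by $\mathbb{Q}_p^c$). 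The logarithmic Frobenius being attached to logarithmically unramified primes only, this existence criterion follows from the conductor proposition of Section~4.3.

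For the case $p\ne\ell$ I would use that $\mathbb{Q}_p^c$ is the unramified $\mathbb{Z}_\ell$-extension of $\mathbb{Q}_p$: since $p\ne\ell$, adjoining $\ell$-power roots of unity is unramified, so $\mathbb{Q}_p^c\subseteq \mathbb{Q}_p^{nr}$ and it is exactly the $\ell$-part of $\mathbb{Q}_p^{nr}$. As $K_{\mathfrak{p}}/\mathbb{Q}_p$ is an $\ell$-extension, its maximal unramified subextension $K_{\mathfrak{p}}\cap \mathbb{Q}_p^{nr}$ is itself an $\ell$-extension, hence contained in $\mathbb{Q}_p^c$; combined with $\mathbb{Q}_p^c\subseteq \mathbb{Q}_p^{nr}$ this yields $K_{\mathfrak{p}}\cap \mathbb{Q}_p^{nr}=K_{\mathfrak{p}}\cap \mathbb{Q}_p^c$, whence $f_{\mathfrak{p}}=\tilde f_{\mathfrak{p}}$ and, by the product formula $\tilde e_{\mathfrak{p}}\tilde f_{\mathfrak{p}}=e_{\mathfrak{p}}f_{\mathfrak{p}}$ (Proposition~1.1.1), $e_{\mathfrak{p}}=\tilde e_{\mathfrak{p}}$. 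Thus the two notions of ramification coincide. When $p$ is unramified, the equality of the two Frobenius is exactly Remark~4 following the definition of the logarithmic Frobenius: for $\mathfrak{p}\nmid\ell$ one has $\tilde\pi_{\mathfrak{p}}=\pi_{\mathfrak{p}}$ and the classical and logarithmic symbols agree on ideals.

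For $p=\ell$ the decisive fact is that $\mathbb{Q}_\ell^c/\mathbb{Q}_\ell$ is totally ramified (the cyclotomic $\mathbb{Z}_\ell$-extension lies inside $\mathbb{Q}_\ell(\zeta_{\ell^\infty})$), so $\mathbb{Q}_\ell^{nr}$ and $\mathbb{Q}_\ell^c$ are linearly disjoint over $\mathbb{Q}_\ell$ with $\mathbb{Q}_\ell^{nr}\cap \mathbb{Q}_\ell^c=\mathbb{Q}_\ell$. I would then run through the four positions of $K_{\mathfrak{p}}$. If $K_{\mathfrak{p}}=\mathbb{Q}_\ell$, both $e_{\mathfrak{p}}$ and $\tilde e_{\mathfrak{p}}$ equal $1$ and the decomposition group is trivial, so both Frobenius are trivial. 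If $\mathbb{Q}_\ell\ne K_{\mathfrak{p}}\subseteq \mathbb{Q}_\ell^{nr}$, then $K_{\mathfrak{p}}$ is classically unramified; were it also logarithmically unramified it would lie in $\mathbb{Q}_\ell^{nr}\cap \mathbb{Q}_\ell^c=\mathbb{Q}_\ell$, a contradiction, so only the classical Frobenius exists. If $\mathbb{Q}_\ell\ne K_{\mathfrak{p}}\subseteq \mathbb{Q}_\ell^c$, then $K_{\mathfrak{p}}$ is logarithmically unramified but, $\mathbb{Q}_\ell^c/\mathbb{Q}_\ell$ being totally ramified, it is classically ramified, so only the logarithmic Frobenius exists. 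In every remaining case $K_{\mathfrak{p}}$ is contained in neither $\mathbb{Q}_\ell^{nr}$ nor $\mathbb{Q}_\ell^c$, so $e_{\mathfrak{p}}>1$ and $\tilde e_{\mathfrak{p}}>1$, and neither Frobenius is defined.

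The routine verifications (existence of each Frobenius requiring the respective unramifiedness, and the product formula) are immediate from the material above; the only point demanding care is the structural dichotomy between $p\ne\ell$, where $\mathbb{Q}_p^c$ is unramified, and $p=\ell$, where $\mathbb{Q}_\ell^c$ is totally ramified, since this is precisely what makes classical and logarithmic ramification agree away from $\ell$ and diverge at $\ell$. The hypothesis $\ell\ne 2$ guarantees there is no archimedean complication and that $\mathbb{Q}_\ell^c$ behaves as stated, matching the contrast with the quadratic ($\ell=2$) example of the previous subsection.
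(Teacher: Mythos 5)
Your proof is correct, and it takes a cleaner and more complete route than the paper's own argument. For $p\ne\ell$ you both deduce equality of the two Frobenius from the equality of uniformizers and local symbols, but the paper never actually verifies that classical and logarithmic ramification coincide away from $\ell$: your argument --- $\mathbb{Q}_p^c$ is the unramified $\mathbb{Z}_\ell$-extension when $p\ne\ell$, so $K_{\mathfrak{p}}\cap\mathbb{Q}_p^{nr}=K_{\mathfrak{p}}\cap\mathbb{Q}_p^c$, hence $f_{\mathfrak{p}}=\tilde f_{\mathfrak{p}}$ and then $e_{\mathfrak{p}}=\tilde e_{\mathfrak{p}}$ from the product formula $e_{\mathfrak{p}}f_{\mathfrak{p}}=\tilde e_{\mathfrak{p}}\tilde f_{\mathfrak{p}}$ --- supplies exactly that missing verification. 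At $p=\ell$ the paper treats only the logarithmically unramified case, and does so by identifying $K_{\mathfrak{p}}$ with an explicit finite layer $B_{n+1}$ of the cyclotomic $\mathbb{Z}_\ell$-tower (which is then classically ramified); the case $\mathbb{Q}_\ell\ne K_{\mathfrak{p}}\subseteq\mathbb{Q}_\ell^{nr}$ and the residual case are left implicit. You instead dispose of all four sub-cases uniformly from the single structural fact that $\mathbb{Q}_\ell^c/\mathbb{Q}_\ell$ is totally ramified, whence $\mathbb{Q}_\ell^{nr}\cap\mathbb{Q}_\ell^c=\mathbb{Q}_\ell$; this is shorter, covers every case in the statement, and isolates the dichotomy that drives the whole proposition (cyclotomic $\mathbb{Z}_\ell$-extension unramified away from $\ell$, totally ramified at $\ell$), at the cost of the concrete cyclotomic description of $K_{\mathfrak{p}}$ that the paper's tower computation exhibits. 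One minor inaccuracy in your closing remark: total ramification of $\mathbb{Q}_\ell^c/\mathbb{Q}_\ell$ does not require $\ell\ne 2$; in the paper that hypothesis is used to write the maximal abelian pro-$\ell$-extension of $\mathbb{Q}_\ell$ as the compositum $\mathbb{Q}_\ell^{nr}\cdot\mathbb{Q}_\ell^c$, which fails for $\ell=2$. Since your argument nowhere uses that compositum decomposition, the slip is harmless.
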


\begin{proof}
If $\ell \ne 2$, the maximal abelian pro-$\ell$-extension $\mathbb{Q}_{\ell}^{ab}$ of $\mathbb{Q}_{\ell}$ is the compositum of $\mathbb{Z}_{\ell}$-unramified extension $\mathbb{Q}_{\ell}^{nr}$ and the cyclotomic one $\mathbb{Q}_{\ell}^{c}$.

Let's assume $p \ne \ell$, then the classical and the logarithmic uniformizing elements are equal, the local symbol coincide :
thus the classical and the logarithmic Frobenius are the same.

Let's assume $p = \ell$ and that $\ell$ is logarithmically unramified in $K$. As we work with  $\ell$-extensions, we obtain:
$$ K_{\mathfrak{p}} \subseteq \mathbb{Q}_{\ell}^{c},$$
\noindent where $ \mathfrak{p}$ is a prime of $K$ above $\ell$.
As $K/\mathbb{Q}$ is a finite extension, we may concentrate on the first steps of the $\mathbb{Z}_{\ell}$-cyclotomic extension : there exists an integer $n$ such that 
$$K_{\mathfrak{p}} \subseteq B_{n+1}, $$
\noindent where $B_{n+1}$ is the maximal real subextension of $\mathbb{Q}_{\ell}(\zeta_{\ell^{n+1}})$ with $\zeta_{\ell^{n+1}}$ a $\ell^{n+1}$-root of unity and  $[B_{n+1} : \mathbb{Q}_{\ell}]= \ell^{n}$. This yields to:
$$ K_{\mathfrak{p}} =\mathbb{Q}_{\ell} \; \textrm{or} \;  K_{\mathfrak{p}}=B_{n+1}.$$
In the first case, the local extension is trivial: $\mathfrak{p}$ is completely splitten in the classical and in the logarithmic sense. In the second case, $\ell$ is ramified in the classical sense but  logarithmically unramified.

\end{proof}

\noindent{\sc Acknowledgement}\smallskip

\noindent{I would like to thank Jean-François Jaulent, my advisor, for our discusssions.}

\small{

}}

{\small

\noindent{Univ. Bordeaux,
IMB, UMR 5251,
351 Cours de la Libération,
F-33400 Talence, France}

\noindent{CNRS,
IMB, UMR 5251,
351 Cours de la Libération,
F-33400 Talence, France}
}
\smallskip

\noindent{\small{ Email address \smallskip
\tt stephanie.reglade@math.u-bordeaux1.fr
}}

\end{document}